\newtheoremstyle{mystyle}
{11pt}                          
{11pt}                          
{}                                      
{}                                      
{\bfseries}                     
{}                                      
{5.5pt}                         
{}                                      
\theoremstyle{mystyle}
\newtheorem{theorem}{Theorem}[section]
\newtheorem{lemma}[theorem]{Lemma}
\newtheorem{proposition}[theorem]{Proposition}
\renewenvironment{proof}[1][Proof.]{\vspace{-16.5pt} \begin{trivlist}
        \item[\hskip \labelsep {\bfseries #1}]}{\qed \end{trivlist}}
\appto\normalsize{
        \abovedisplayskip=5.5pt plus 2pt minus 2pt
        \belowdisplayskip=5.5pt plus 2pt minus 2pt
        \abovedisplayshortskip=5.5pt plus 2pt minus 2pt
        \belowdisplayshortskip=5.5pt plus 2pt minus 2pt}
\appto\small{
        \abovedisplayskip=5.5pt plus 2pt minus 2pt
        \belowdisplayskip=5.5pt plus 2pt minus 2pt
        \abovedisplayshortskip=5.5pt plus 2pt minus 2pt
        \belowdisplayshortskip=5.5pt plus 2pt minus 2pt}
\newcommand{\gap}{\vspace{11pt}}
\newcommand{\diag}{\operatorname{diag}}
\newcommand{\tr}{\operatorname{tr}}
\newcommand{\R}{\mathcal{R}}
\newcommand{\Rn}{\mathcal{R}^n}
\newcommand{\Sn}{\mathcal{S}^n}
\newcommand{\Hn}{\mathcal{H}^n}
\newcommand{\V}{{\cal V}}
\title{\bf  A H\"{o}lder type inequality  and an  interpolation theorem \\in
 Euclidean Jordan algebras}
\author{
        M. Seetharama Gowda\\
        Department of Mathematics and Statistics\\
        University of Maryland, Baltimore County\\
        Baltimore, Maryland 21250, USA\\
        gowda@umbc.edu
}
\date{\today}
\begin{document}

\maketitle

\begin{abstract}
In a  Euclidean Jordan algebra $\V$ of rank $n$ which carries the trace inner product, to each element 
$x$ we associate the eigenvalue vector $\lambda(x)$ whose components are the eigenvalues of $x$ written 
in the decreasing order. For any $p\in [1,\infty]$, we define the spectral $p$-norm of $x$ to be the $p$-norm of 
$\lambda(x)$ in $\R^n$.  In this paper, we show that 
$||x\circ y||_1\leq ||x||_p\,||y||_q,$
where $x\circ y$ denotes the Jordan product of two elements $x$ and $y$ in $\V$ and 
$q$ is the conjugate of $p$. 
For a linear transformation on $\V$, we state and prove an interpolation theorem relative to these spectral norms. 
In addition,   
we compute/estimate  the norms of Lyapunov transformations, quadratic representations, and  
positive transformations on $\V$.
\end{abstract}

\vspace{1cm}
\noindent{\bf Key Words:}
Euclidean Jordan algebra, H\"{o}lder type inequality,  strong operator commutativity, majorization, Schur-convexity, positive transformation, interpolation theorem.
\\

\noindent{\bf AMS Subject Classification:} 15A18,  15A60, 17C20
\newpage

\section{Introduction}
The classical H\"{o}lder and Minkowski inequalities, when stated
 in the setting of 
$\Rn$, say that for two real vectors $x=(x_1,x_2,\ldots, x_n)$ and $y=(y_1,y_2,\ldots, y_n)$ 
and for any $p\in [1,\infty]$ with conjugate $q$ (that is, $p^{-1}+q^{-1}=1$), 
$$\Big |\sum_{1}^{n}x_iy_i\Big |\leq \sum_{1}^{n}|x_iy_i|\leq ||x||_p\,||y||_q\quad\mbox{and}\quad ||x+y||_p\leq ||x||_p+||y||_p,$$
where $||x||_p$ denotes the $p$-norm of $x$, etc.
Viewing $\Rn$ as a Euclidean Jordan algebra with Jordan product
$x\circ y:=(x_1y_1,x_2y_2,\ldots, x_ny_n)$, inner product $\langle x,y\rangle=\sum_{1}^{n}x_iy_i$, and components of $x$ as eigenvalues of $x$, we may restate the above inequalities as 
$$|\langle x,y\rangle| \leq ||\lambda(x\circ y)||_1\leq ||\lambda(x)||_p\,||\lambda(y)||_q\quad\mbox{and}\quad ||\lambda(x+y)||_p\leq ||\lambda(x)||_p+||\lambda(y)||_p,$$
where $\lambda(x)$ denotes the vector of eigenvalues (here, entries) of $x$ written in the decreasing order, etc. 
Motivated by the appearance of inequalities of the above type 
in various matrix theory settings (especially for real symmetric or complex Hermitian matrices) and in the optimization literature over symmetric cones, we raise the issue of proving such  
inequalities over general Euclidean Jordan algebras. To elaborate, 
let $(\V,\circ,\langle\cdot,\cdot\rangle)$ be a Euclidean Jordan algebra of rank $n$ 
\cite{faraut-koranyi}, \cite{gowda-sznajder-tao}. We assume that $\V$ carries the trace inner product, that is,
$\langle x,y\rangle:=tr(x\circ y).$
For each $x\in \V$, we associate the eigenvalue vector
$\lambda(x)$ in $\R^n$ whose entries are the eigenvalues of $x$ written in the decreasing order. 
For $p\in [1,\infty]$, we define  the {\it spectral $p$-norm} on $\V$ by
$$||x||_{p}:=||\lambda(x)||_p,$$
where the right-hand side denotes the $p$-norm of the vector $\lambda(x)$ in $\R^n$. 
Using majorization ideas, a generalization of Thompson's triangle inequality, and case-by-case analysis 
(of five types of simple Euclidean Jordan algebras), Tao et al., \cite{tao et al} have shown that 
$||\cdot||_p$ is a norm on $\V$ thereby  establishing the Minkowski inequality in the setting of 
Euclidean Jordan algebras. 
For a comprehensive proof based on majorization and Schur-convexity theorem, see \cite{jeong-gowda}. 
Regarding the H\"{o}lder inequality, Tao et al. \cite{tao et al} 
have also shown that the inequality $|\langle x,y\rangle |\leq ||x||_p\,||y||_q$ holds for all $x$ and $y$ when $\V$ is a 
simple Euclidean Jordan algebra. For $p=2$, the inequality $||x\circ y||_1\leq ||x||_2\,||y||_2$ was proved
 in \cite{wang et al} and \cite{meng et al}.
Going beyond these special cases, in this paper we  establish the inequalities
 $$|\langle x,y\rangle|\leq ||x\circ y||_1\leq ||x||_p\,||y||_q$$
 over  general Euclidean Jordan algebras. Our related contributions include  an interpolation theorem for linear transformations on $\V$ relative to the spectral norms and computation/estimation of 
norms of Lyapunov transformations, quadratic representations, and positive transformations.

\gap

In  the first part of our paper, we establish the following  H\"{o}lder type inequality. 
To explain,  we introduce a notation and a definition. Given  any Jordan frame
$\{e_1,e_2,\ldots, e_n\}$ in $\V$, we consider the {\it ordered Jordan frame} 
${\cal E}:=\left (e_1,e_2,\ldots, e_n\right )$ and write, for any $x\in \V$,
$$\lambda(x)*\,{\cal E}:=\sum_{1}^{n}\lambda_i(x)e_i.$$
We say that two elements $x$ and $y$ in $\V$ {\it strongly operator commute} if there is an ordered Jordan frame ${\cal E}$ 
such that $x=\lambda(x)*{\cal E}$ and $y=\lambda(y)*{\cal E}.$ 

\gap

\begin{theorem}(A H\"{o}lder type inequality in Euclidean Jordan algebras) \label{main theorem}
{\it Let $x,y\in \V$ and $p\in [1,\infty]$ with conjugate $q$. Then,
\begin{equation}\label{main inequality}
||x\circ y||_1\leq ||x||_p\,||y||_q.
\end{equation}
Moreover, equality holds in {\rm (\ref{main inequality})} if and only if
\begin{itemize}
\item [$(a)$] $x$ and $y\circ \varepsilon$ strongly operator commute and
\item [$(b)$]
$\langle \lambda(x),\lambda(y\circ \varepsilon)\rangle=||\lambda(x)||_p\,||\lambda(y)||_q$ holds in $\Rn$,
\end{itemize}
where $x\circ y$ has the spectral decomposition $x\circ y=(z_1e_1+z_2e_2+\cdots+z_ke_k)-(z_{k+1}e_{k+1}+\cdots+z_ne_n)$ for some $k$, $0\leq k\leq n$ and 
$z_i\geq 0$ for all $i$,  
and $\varepsilon:=(e_1+e_2+\cdots+e_k)-(e_{k+1}+\cdots+e_n).$  
}
\end{theorem}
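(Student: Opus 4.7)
The strategy is to bound $\|x\circ y\|_1$ by a chain of three classical ingredients --- the associativity of the trace form, the Fan-type trace inequality in $\V$, and the scalar H\"older inequality in $\Rn$ --- and then close the estimate with a Peirce pinching step. Starting from the given spectral decomposition of $x\circ y$ and $\varepsilon = e_1+\cdots+e_k - e_{k+1}-\cdots-e_n$, the elementary observation
\[
\|x\circ y\|_1 = \sum_{i=1}^n |z_i| = \langle x\circ y,\varepsilon\rangle = \langle x, y\circ\varepsilon\rangle
\]
uses only the definitions and the associativity $\langle a\circ b,c\rangle = \langle a, b\circ c\rangle$ of the trace inner product. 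Applying the Fan-type inequality $\langle a,b\rangle \leq \langle \lambda(a),\lambda(b)\rangle$ in $\V$ (which is known to hold, with equality iff $a$ and $b$ strongly operator commute) and then classical H\"older in $\Rn$, one obtains
\[
\|x\circ y\|_1 \leq \langle \lambda(x),\lambda(y\circ\varepsilon)\rangle \leq \|\lambda(x)\|_p\,\|\lambda(y\circ\varepsilon)\|_q = \|x\|_p\,\|y\circ\varepsilon\|_q.
\]

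The remaining inequality $\|y\circ\varepsilon\|_q \leq \|y\|_q$ is the main technical obstacle. Setting $c := e_1+\cdots+e_k$ and $c' := e_{k+1}+\cdots+e_n$, so $\varepsilon = c - c'$ and $c + c' = e$, a direct Peirce-decomposition computation gives $y\circ\varepsilon = y_1 - y_0$, where $y_1$ and $y_0$ are the Peirce-$1$ parts of $y$ relative to $c$ and $c'$, respectively. Since $y_1$ and $-y_0$ lie in the orthogonal subalgebras $\V(c,1)$ and $\V(c,0)$, one has $\|y\circ\varepsilon\|_q^q = \|y_1\|_q^q + \|y_0\|_q^q = \|y_1 + y_0\|_q^q$, so the bound reduces to the Peirce pinching inequality $\|y_1 + y_0\|_q \leq \|y\|_q$, i.e.\ to $\lambda(P_c y + P_{c'} y) \prec \lambda(y)$. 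This majorization is the Euclidean Jordan algebra analogue of the classical pinching inequality; I would deduce it from the identity $P_c y + P_{c'} y = \tfrac{1}{2}(y + P_\varepsilon y)$ together with $\lambda(P_\varepsilon y) = \lambda(y)$ (since $\varepsilon^2 = e$) and Lidskii--Ky Fan type majorization as developed in \cite{jeong-gowda}. This is the key step: the other two ingredients are classical, whereas the pinching estimate carries all the specifically Jordan-algebraic content of the proof.

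\textbf{Equality case.} Equality in \eqref{main inequality} for the $\varepsilon$ chosen from a spectral decomposition of $x\circ y$ is equivalent to simultaneous equality in all three bounds above. Fan equality is exactly condition (a); combining classical H\"older equality $\langle\lambda(x),\lambda(y\circ\varepsilon)\rangle = \|\lambda(x)\|_p\|\lambda(y\circ\varepsilon)\|_q$ with the pinching equality $\|\lambda(y\circ\varepsilon)\|_q = \|\lambda(y)\|_q$ (which holds iff the Peirce-$\tfrac{1}{2}$ part of $y$ relative to $c$ vanishes) yields condition (b). The converse is immediate on reading the chain backward: (a) turns the Fan step into equality and (b) telescopes the remaining two steps into the single equation $\|x\circ y\|_1 = \|x\|_p\|y\|_q$.
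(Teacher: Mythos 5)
Your proposal is correct and follows essentially the same route as the paper: the identical chain $\|x\circ y\|_1=\langle x,y\circ\varepsilon\rangle\leq\langle\lambda(x),\lambda(y\circ\varepsilon)\rangle\leq\|x\|_p\,\|y\circ\varepsilon\|_q$, the same reduction of $\|y\circ\varepsilon\|_q=\|u-w\|_q$ to $\|u+w\|_q$ via the eigenvalue-union observation, and the same pinching majorization $u+w\prec y$ (which the paper cites from the literature, while you rederive it from $P_c+P_{e-c}=\tfrac{1}{2}(I+P_\varepsilon)$ and the sum-majorization of Proposition \ref{majorization} --- a valid and self-contained alternative justification of that one cited fact). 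The only slip is your parenthetical claim that equality in the pinching step forces the Peirce-$\tfrac{1}{2}$ part of $y$ to vanish: that is not needed for the stated equivalence and is false for $q\in\{1,\infty\}$.
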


Our proof of the above result is based on the following generalization of the 
Fan-Theobald trace inequality of matrix theory (which is  related to von Neumann's trace inequality). The   
inequality (\ref{vn inequality}) given below extends 
the so-called rearrangement inequality of Hardy, Littlewood, and P\'{o}lya when $\V=\Rn$ \cite{marshall-olkin} and the 
Fan-Theobald trace
 inequality \cite{fan}, \cite{theobald} when $\V=\Sn$ or $\Hn$ (the algebras of $n\times n$ real/complex Hermitian matrices). For simple Euclidean Jordan algebras, this result has been observed in \cite{lim et al}, \cite{gowda-tao-cauchy}. 
Based on this simple algebra result, the rearrangement inequality, and the fact that any Euclidean Jordan algebra is a
product of simple algebras, one can prove the general result. For a different and comprehensive proof, see \cite{baes}.

\gap

\begin{theorem}(A generalized  Fan-Theobald trace inequality)
\label{generalized ft inequality}
{\it Let  $x,y\in \V$. Then,
\begin{equation}\label{vn inequality}
\langle x, y\rangle\leq  \langle \lambda(x),\lambda(y)\rangle.
\end{equation}
Moreover, equality holds in {\rm (\ref{vn inequality})} if and only if 
$x$ and $y$ strongly operator commute.
}
\end{theorem}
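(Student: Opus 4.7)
The plan is to prove (\ref{vn inequality}) by applying Birkhoff's theorem and the classical Hardy--Littlewood--P\'olya rearrangement inequality to a suitable doubly stochastic matrix, and then to settle the equality case by reducing to simple Euclidean Jordan algebras, where the result is already available in the cited literature.

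For the inequality, I would take spectral decompositions $x=\sum_{i=1}^{n}\lambda_i(x)e_i$ and $y=\sum_{j=1}^{n}\lambda_j(y)f_j$ with both eigenvalue sequences listed in decreasing order. Bilinearity of the trace inner product gives
$$\langle x,y\rangle=\sum_{i,j}\lambda_i(x)\lambda_j(y)M_{ij},\qquad M_{ij}:=\langle e_i,f_j\rangle.$$
Each $M_{ij}\ge 0$ because the symmetric cone of $\V$ is self-dual and contains every primitive idempotent. The identities $\sum_i e_i=\sum_j f_j=e$ (the unit) together with $\tr(e_i)=\tr(f_j)=1$ force all row and column sums of $M$ to equal $1$, so $M$ is doubly stochastic. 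Birkhoff's theorem writes $M=\sum_\sigma c_\sigma P_\sigma$ as a convex combination of permutation matrices, and the rearrangement inequality gives $\sum_i\lambda_i(x)\lambda_{\sigma(i)}(y)\le\sum_i\lambda_i(x)\lambda_i(y)$ for every $\sigma$; averaging with weights $c_\sigma$ yields (\ref{vn inequality}).

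The ``if'' direction of the equality statement is immediate: if $x$ and $y$ strongly operator commute relative to an ordered Jordan frame $\E=(e_1,\ldots,e_n)$, then $\langle e_i,e_j\rangle=\tr(e_i\circ e_j)=\delta_{ij}$ yields $\langle x,y\rangle=\sum_i\lambda_i(x)\lambda_i(y)=\langle\lambda(x),\lambda(y)\rangle$. For the converse, I would decompose $\V$ into simple ideals $\V=\V_1\times\cdots\times\V_m$ and write $x=(x^{(1)},\ldots,x^{(m)})$, $y=(y^{(1)},\ldots,y^{(m)})$. The trace inner product splits as $\langle x,y\rangle=\sum_k\langle x^{(k)},y^{(k)}\rangle$, while a rearrangement argument shows $\langle\lambda(x),\lambda(y)\rangle\ge\sum_k\langle\lambda(x^{(k)}),\lambda(y^{(k)})\rangle$; combined with the simple-algebra version of (\ref{vn inequality}) applied factor-wise, equality propagates to every factor. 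The simple-algebra equality result of \cite{lim et al}, \cite{gowda-tao-cauchy} then produces an ordered Jordan frame $\E^{(k)}$ in each $\V_k$ relative to which $x^{(k)}$ and $y^{(k)}$ are simultaneously diagonal, and concatenating these local frames in an order compatible with the global decreasing sort of $\lambda(x)$ (and of $\lambda(y)$) yields the required ordered Jordan frame $\E$ for $\V$.

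I expect the main obstacle to be precisely this bookkeeping in the converse half: one must verify that the factor-wise orderings can be chosen so that a single concatenation simultaneously aligns with the decreasing orderings of both $\lambda(x)$ and $\lambda(y)$. This requires careful exploitation of the equality conditions in the rearrangement inequality to treat repeated eigenvalues that occur across different simple factors, together with the freedom to relabel primitive idempotents inside each eigenspace.
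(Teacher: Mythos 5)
Your proposal is correct, and it is worth pointing out that the paper never writes out a proof of Theorem \ref{generalized ft inequality}: it only sketches a reduction to the simple-algebra case (citing \cite{lim et al}, \cite{gowda-tao-cauchy}) combined with the rearrangement inequality and the product decomposition, and defers to \cite{baes} for a comprehensive argument. Your treatment of the inequality itself is genuinely different from that sketch and more self-contained: the matrix $M_{ij}=\ip{e_i}{f_j}$ is doubly stochastic (nonnegativity from self-duality of the symmetric cone, unit row and column sums from $\sum_i e_i=\sum_j f_j=e$ and $\tr(e_i)=\tr(f_j)=1$ under the trace inner product), and Birkhoff plus the rearrangement inequality finishes it with no appeal to the classification of simple algebras; this is essentially the route of \cite{baes}. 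What the paper's sketch buys instead is that the simple-algebra equality characterization is needed anyway for the converse, so the reduction does double duty. For the equality case you follow the paper's plan, and the bookkeeping step you flag as the main obstacle does close: equality forces both $\langle x^{(k)},y^{(k)}\rangle=\langle\lambda(x^{(k)}),\lambda(y^{(k)})\rangle$ for every $k$ and equality in the rearrangement step $\sum_k\langle\lambda(x^{(k)}),\lambda(y^{(k)})\rangle=\langle\lambda(x),\lambda(y)\rangle$. The latter says the pairing $\{(\lambda_i(x^{(k)}),\lambda_i(y^{(k)}))\}_{k,i}$ attains the maximum of $\sum_j a_j b_{\tau(j)}$ over all bijections $\tau$ between the two eigenvalue multisets; if two pairs $(a,b)$ and $(a',b')$ in this family had $a>a'$ and $b<b'$, swapping their second coordinates would strictly increase the sum, so the family is comonotone. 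Listing the $n$ primitive idempotents supplied by the factor-wise frames $\E^{(k)}$ in decreasing order of first coordinate, breaking ties by decreasing second coordinate, then makes both coefficient sequences simultaneously decreasing, which is exactly the required ordered Jordan frame witnessing strong operator commutativity. With that observation inserted, your proof is complete.
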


In the second part of the paper, we compute/estimate the (spectral) norms of  
the Lyapunov transformation $L_a$ defined by $L_a(x):=a\circ x$, the quadratic representation $P_a$ defined by 
$P_a:=2L_a^2-L_{a^2}$, and a positive (linear) transformation defined by the condition $x\geq 0\Rightarrow P(x)\geq 0$.\\ 
\\
In the final part of the paper, we describe an interpolation theorem for a linear transformation on $\V$ 
relative to the spectral norms. Based on the $K$-method of real interpolation theory \cite{lunardi}, we show that 
$$
||T||_{p\rightarrow p}\leq ||T||_{r\rightarrow r}^{1-\theta}\,\,||T||_{s\rightarrow s}^{\theta},
$$
where $T:\V\rightarrow \V$ is a linear transformation with $||T||_{p\rightarrow p}$ denoting the the norm of $T$ relative to the spectral $p$-norm and real numbers $r,s,p\in [1,\infty]$ are related by $\frac{1}{p}=\frac{1-\theta}{r}+\frac{\theta}{s}$ for some $\theta\in [0,1]$.
 
\section{Preliminaries}
The symbol $\Rn$ denotes the usual Euclidean $n$-space in which we regard elements as either row vectors or column vectors depending on the context. 
Throughout this paper, $(\V, \circ,\langle\cdot,\cdot\rangle)$ denotes a Euclidean Jordan algebra of rank 
$n$ and unit element $e$ \cite{faraut-koranyi}, \cite{gowda-sznajder-tao},
 with $x\circ y$ denoting the Jordan product and $\langle x,y\rangle$ 
denoting the inner product of $x$ and $y$ in $\V$. We specifically note that 
\begin{equation}\label{jordan property}
\langle x\circ y,z\rangle=\langle x,y\circ z\rangle\quad \mbox{for all}\,\,x,y,z\in \V.
\end{equation}
For convenience, we use the same inner product notation in $\Rn$ (which carries the usual inner product) and in $\V$.

 It is known that any Euclidean Jordan algebra is a direct product/sum 
of simple Euclidean Jordan algebras and every simple Euclidean Jordan algebra is isomorphic one of five algebras, 
three of which are the algebras of $n\times n$ real/complex/quaternion Hermitian matrices. The other two are: the algebra of $3\times 3$ octonion Hermitian matrices and the Jordan spin algebra.
We let $\Sn$ ($\Hn$) denote the algebra of all $n \times n$ real symmetric (respectively, complex Hermitian) matrices. 
 
According to the {\it spectral decomposition 
theorem} \cite{faraut-koranyi}, any element $x\in \V$ has a decomposition
$$x=x_1e_1+x_2e_2+\cdots+x_ne_n,$$
where the real numbers $x_1,x_2,\ldots, x_n$ are (called) the eigenvalues of $x$ and 
$\{e_1,e_2,\ldots, e_n\}$ is a Jordan frame in $\V$. (An element may have spectral decompositions coming from different Jordan frames, but the eigenvalues remain the same.) Then, $\lambda(x)$-- called the {\it eigenvalue vector} of $x$-- is the vector of eigenvalues of $x$ written in the decreasing order. The {\it trace  and spectral $p$-norm} of $x$ are defined by 
$$tr(x):=x_1+x_2+\cdots+x_n\quad\mbox{and}\quad ||x||_p:=||\lambda(x)||_p,$$
where $||\lambda(x)||_p$ denotes the usual $p$-norm of the  vector $\lambda(x)$ in $\Rn$. (Note that $||x||_p$ is the $p$-norm of any vector in $\Rn$ formed by
$x_1,x_2,\ldots,x_n$.)

We use the notation $x\geq 0$ ($x\leq 0$, $x>0$) when all the eigenvalues of $x$ are nonnegative (respectively, nonpositive, positive). Also, $x\geq y$ (or $y\leq x$) means that $x-y\geq 0$.
When $x\geq 0$ and has the spectral decomposition $x=\sum x_ie_i$, we define $\sqrt{x}:=\sum \sqrt{x_i}e_i$.

\gap

An element $c$ in $\V$ is an idempotent if $c^2=c$. Corresponding to such an element, the {\it Peirce decomposition of $\V$} is the orthogonal direct sum (\cite{faraut-koranyi}, page 62 and Proposition IV.1.1)
$$\V=\V(c,1)\oplus\V(c,\frac{1}{2})\oplus \V(c,0),$$
where $\V(c,\gamma):=\{x\in \V:x\circ c=\gamma\,x\}$ and $\gamma\in \{0,\frac{1}{2},1\}.$
Here $\V(c,1)$ and $\V(c,0)$ are subalgebras of $\V$ and $\V(c,1)\circ \V(c,0)=\{0\}$.
There is another related Peirce decomposition of $\V$: Corresponding to a Jordan frame $\{e_1,e_2,\ldots, e_n\}$, let
$\V_{ii}:=\V(c_i,1)=\R\,e_i$ and for $i\neq j$, $\V_{ij}:=\V(c_i,\frac{1}{2})\cap \V(c_j,\frac{1}{2})$. Then
$\V$ is the orthogonal direct sum of subspaces $\V_{ij}$ (\cite{faraut-koranyi}, Theorem IV.2.1).
Hence any element $x\in \V$ has a Peirce decomposition: $x=\sum_{i\leq j}x_{ij}$, where $x_{ij}\in \V_{ij}$.

\gap

Now, starting from the given inner product in $\V$, one can define the trace inner product $\langle x,y\rangle_{tr}:= tr(x\circ y)$ on $\V$ which is also  compatible with the given Jordan product (\cite{faraut-koranyi}, Prop. II.4.3 and Prop. III.1.5).  Various  concepts/results/decompositions  remain  the  same  when  the  given  inner  product  is
replaced by the trace inner product; in particular, for an element in $\V$, the spectral decomposition,
eigenvalues, and trace remain the same.
Under this trace inner product, the norm of any primitive element (such as an element in a Jordan frame)  
is one and so every Jordan frame becomes an orthonormal set.
{\it From now on, throughout this paper, we assume that the inner product is the\\ trace inner product, that is, 
$\langle x,y\rangle=tr(x\circ y).$}

\gap

Given a spectral decomposition $a=\sum a_ie_i$, 
we write
\begin{center}
$|a|:=\sum |a_i|e_i\quad \mbox{and}\quad ||a||_1=\sum|a_i|=tr(|a|).$
\end{center}
With this notation, we observe that 
\begin{equation}\label{ip and one-norm}
|\langle x,y\rangle|=|tr(x\circ y)|\leq tr(|x\circ y|)=||x\circ y||_1.
\end{equation}

\gap

Recall that two elements $x$ and $y$ in $\V$ {\it strongly operator commute} if there is an ordered Jordan frame ${\cal E}$ 
such that $x=\lambda(x)*{\cal E}$ and $y=\lambda(y)*{\cal E}.$
The terms `simultaneous order diagonalization' and `similar joint decomposition' have also
been used in the literature \cite{lim et al}, \cite{baes}.
Note that this notion is stronger than the usual operator commutativity where it is required that 
$x$ and $y$ have their spectral decompositions with respect to a common Jordan frame 
(or equivalently, the linear operators $L_x$ and $L_y$ commute, where $L_x(z):=x\circ z$, etc.) 
For example, in $R^2$, the vectors $x=(1,0)$ and $y=(0,1)$ operator commute, but not strongly.

\gap

Given two (column) vectors $p$ and $q$ in $\R^n$, we say that $p$ is majorized by $q$ and write $p\prec q$ if $p=Aq$ for some doubly stochastic matrix $A\in \R^{n\times n}$ \cite{marshall-olkin}. (So, $A$ is a nonnegative matrix with every 
row and column sum one. By a well-known result of Birkhoff, a doubly stochastic matrix is a convex combination of permutation matrices, see \cite{bhatia-book}.) 
For  $x,y\in \V$, we say that
$x$ is {\it majorized} by $y$ and write $x\prec y$ if $\lambda(x)\prec \lambda(y)$ in $\R^n$.
If $f:\R^n\rightarrow \R$ is a convex function and $F:=f\circ \lambda$, then (by the classical Schur-convexity theorem \cite{marshall-olkin}), we have:
\begin{equation}\label{schur convexity}
x\prec y\Rightarrow F(x)\leq F(y).
\end{equation}
See \cite{jeong-gowda} for applications of this in Euclidean Jordan algebras.

\gap

Throughout this paper, for a real number $\alpha$, we let $sgn\,\alpha$ denote $1$, $0$, or $-1$ according as
whether $\alpha$ is positive, zero, or negative.

\section{Proof of Theorem \ref{main theorem}} 

Toward establishing $(\ref{main inequality})$, we first prove a weaker inequality given below.  
It is a consequence of
 Theorem \ref{generalized ft inequality}. 

\begin{proposition} \label{prop lp inequality}
{\it 
Let  $x,y\in \V$ and $p\in [1,\infty]$ with conjugate $q$. 
Then,
\begin{equation} \label{lp inequality}
|\langle x,y\rangle |\leq ||x||_{p}\,||y||_{q}.
\end{equation}
Equality holds in {\rm (\ref{lp inequality})} if and only if, with $\eta:=sgn\,\langle x,y\rangle$, 
\begin{itemize}
\item [$(i)$] $\eta\, x$ and $y$ strongly operator commute and
\item [$(ii)$]  
$\big \langle \lambda(\eta\,x),\lambda(y)\big \rangle=||\lambda(\eta\,x)||_p\,||\lambda(y)||_q$ holds in $\Rn$.
\end{itemize}
}
\end{proposition}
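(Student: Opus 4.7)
The plan is to chain two estimates: use Theorem~\ref{generalized ft inequality} to pass from the pairing $\langle x,y\rangle$ in $\V$ to the pairing of eigenvalue vectors in $\R^n$, and then close with the classical H\"older inequality in $\R^n$. Both estimates have well-known equality conditions which align with $(i)$ and $(ii)$ of the proposition.

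Concretely, set $\eta := \operatorname{sgn}\langle x,y\rangle$. The case $\eta = 0$ makes the inequality trivial, so assume $\eta \in \{-1,+1\}$. Then $|\langle x,y\rangle| = \eta\,\langle x,y\rangle = \langle \eta x, y\rangle$. Applying Theorem~\ref{generalized ft inequality} to the pair $(\eta x, y)$ produces
$$|\langle x,y\rangle| \;=\; \langle \eta x, y\rangle \;\le\; \langle \lambda(\eta x),\, \lambda(y)\rangle,$$
after which classical H\"older in $\R^n$ with conjugate exponents $p,q$ yields
$$\langle \lambda(\eta x),\, \lambda(y)\rangle \;\le\; \|\lambda(\eta x)\|_p\,\|\lambda(y)\|_q \;=\; \|x\|_p\,\|y\|_q,$$
where the final equality is immediate because the entries of $\lambda(\eta x)$ are those of $\lambda(x)$ up to sign and reordering, both of which leave the $p$-norm unchanged. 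Composing the two estimates gives (\ref{lp inequality}).

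For the equality clause, equality in (\ref{lp inequality}) is equivalent to simultaneous equality in each of the two estimates above (no slack having been introduced by the identity $|\langle x,y\rangle| = \langle \eta x,y\rangle$). By the equality statement of Theorem~\ref{generalized ft inequality}, equality in the first estimate is exactly the strong operator commutativity of $\eta x$ and $y$, which is condition $(i)$. Equality in the second estimate is precisely the equality case of the classical H\"older inequality in $\R^n$, which is condition $(ii)$. The converse direction---(i) and (ii) imply equality in (\ref{lp inequality})---is obtained by running the same chain in reverse.

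I do not anticipate a real obstacle: once Theorems~\ref{generalized ft inequality} and classical H\"older are in hand, the argument reduces to tracking the sign $\eta$ and the trivial observation that $\|\lambda(\eta x)\|_p = \|\lambda(x)\|_p$. The only mildly delicate point is the degenerate case $\eta = 0$, which is absorbed into the trivial direction of the inequality rather than the equality characterization.
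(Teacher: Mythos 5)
Your proof is correct and follows essentially the same route as the paper: rewrite $|\langle x,y\rangle|$ as $\langle \eta x,y\rangle$, apply Theorem~\ref{generalized ft inequality}, then classical H\"older in $\R^n$, and read off the equality conditions from each link in the chain. The paper likewise dismisses the case $\langle x,y\rangle=0$ at the outset, so your treatment of the degenerate case matches as well.
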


\gap

\noindent The inequality (\ref{lp inequality}) for a simple algebra is noted in \cite{tao et al}, Theorem 4.2. 
A partial result for the equality in a simple algebra is stated in \cite{tao et al}, Corollary 4.2, where it is assumed that 
$x,y\geq 0$.

\gap

\begin{proof}
Without loss of generality, let $\langle x,y\rangle\neq 0$. Since $\eta:=sgn\,\langle x,y\rangle$ (which is $1$ or $-1$), from  Theorem 
\ref {generalized ft inequality} and the (classical) H\"{o}lder's inequality in $\R^n$, we have
\begin{equation} \label{inequality chain}
|\langle x, y\rangle|=\langle \eta\,x,y\rangle\leq  \langle \lambda(\eta\,x),\lambda(y)\rangle\leq ||\lambda(\eta\,x)||_p\,||\lambda(y)||_q=||x||_p\,||y||_q,
\end{equation}
where we note that 
$||\lambda(\eta\,x)||_p=||x||_p.$ This proves the inequality (\ref{lp inequality}).
Suppose $|\langle x,y\rangle |= ||x||_{p}\,||y||_{q}.$ Then, from (\ref{inequality chain}), $\langle \eta\,x,y\rangle=  \langle \lambda(\eta\,x),\lambda(y)\rangle$. It follows from Theorem \ref{generalized ft inequality}
 that $\eta\,x$ and $y$ strongly operator commute. Also, from  (\ref{inequality chain}), we get the equality  stated in $(ii)$. \\
Now suppose conditions $(i)$ and $(ii)$ hold. Then, by an application of Theorem \ref{generalized ft inequality}  
and $(ii)$ we see that 
the inequalities in (\ref{inequality chain}) turn into equalities.
\end{proof}

\gap

We now come to the proof of Theorem \ref{main theorem}. For the case of $\V=\Sn$ (or $\Hn$), the inequality
(\ref{main inequality}) can be proved using known singular values inequalities: For an $n\times n$ 
real/complex matrix $A$, let $\sigma(A):=(\sigma_1(A),\sigma_2(A),\ldots,\sigma_n(A))$ denote the 
vector of singular values of $A$ (= the eigenvalues of $\sqrt{A^*A}$) written in the decreasing order.
Then, for any two matrices $A$ and $B$, we have the inequalities
 $\sum_{1}^{n}\sigma_{i} (AB)\leq \langle \sigma(A),\sigma(B)\rangle$ and  $\sum_{1}^{n}\sigma_{i} (A+B)\leq \sum_{1}^{n}\sigma_{i}(A)+\sum_{1}^{n}\sigma_{i} (B)$ (see \cite{horn-johnson}, Theorem 3.3.14 and Corollary 3.4.3). Using these, for any $X,Y\in \Sn$ or $\Hn$, we see that
$$||X\circ Y||_1:=\sum_{1}^{n}|\lambda_{i}(X\circ Y)|=\sum_{1}^{n}\sigma_{i}(X\circ Y)=
\sum_{1}^{n}\frac{1}{2}\sigma_{i}(XY+YX)$$
$$\leq
\frac{1}{2}\Big (\sum_{1}^{n}\sigma_{i}(XY)+\sum_{1}^{n}\sigma_{i}(YX)\Big )\leq \Big \langle \sigma(X),\sigma(Y)\Big \rangle\leq ||\sigma(X)||_p\,||\sigma(Y)||_q=||\lambda(X)||_p\,||\lambda(Y)||_q.$$
The proof given below, based on majorization techniques, is comprehensive and avoids looking at particular cases of simple algebras.

\gap

\begin{proof}  
We fix $x,y\in \V$.
If  $x\circ y\geq 0$ or $x\circ y\leq 0$, then, 
$$||x\circ y||_1=\tr(|x\circ y|)=|\tr(x\circ y)|=|\langle x,y\rangle|\leq ||x||_{p}\,||y||_{p}$$
 by Proposition \ref{prop lp inequality}. Moving away from these two cases, consider the spectral decomposition of $x\circ y$ which can be written in the following form: For some  natural number $k$, $1\leq k<n$,   
$$x\circ y=(z_1e_1+z_2e_2+\cdots+z_ke_k)-(z_{k+1}e_{k+1}+\cdots+z_ne_n),$$
 where $z_i\geq 0$ for all $i$. 
Now, let $\varepsilon:=(e_1+e_2+\cdots+e_k)-(e_{k+1}+\cdots+e_n).$ Then, $\varepsilon^2=e$ and  
$|x\circ y|=(x\circ y)\circ \varepsilon$. We now claim that
\begin{equation}\label{claim}
||y\circ \varepsilon||_{q}\leq ||y||_{q}.
\end{equation}
To see this, let $c=e_1+e_2+\cdots +e_k$. By the Peirce decomposition theorem \cite{faraut-koranyi}, 
$\V$ is the orthogonal direct sum of 
$\V(c,1)$, $\V(c,\frac{1}{2})$, and $\V(c,0)$. Hence, we can write $y=u+v+w$, where $u\in \V(c,1)$, 
$v\in \V(c,\frac{1}{2})$, and  $w\in \V(c,0)$. Since $\varepsilon=2c-e$,  an 
easy computation shows that 
$$y\circ \varepsilon=u-w.$$  
As $\V(c,1)\circ \V(c,0)=\{0\}$, by  working with the spectral decompositions of $u$ in $\V(c,1)$ (which is a Euclidean Jordan algebra of rank $k$) and $w$ in $\V(c,0)$ (which is a Euclidean Jordan algebra of rank $n-k$), we see that the  
eigenvalues of $u-w$  
comprise of eigenvalues of $u$ and  $-w$; hence, $\lambda(u-w)$ is just a permutation of the vector formed by $\lambda(u)$ (which can be viewed as a vector in 
$\mathcal{R}^k$) and $-\lambda(w)$ (which can be viewed as a vector in $\mathcal{R}^{n-k}$). 
A similar statement holds for $u+w$. Hence,
$$||u-w||_q=||\lambda(u-w)||_q= ||\left [ \begin{array}{r} \lambda(u)\\-\lambda(w)\end{array}\right ]||_q=||\left [ \begin{array}{r} \lambda(u)\\\lambda(w)\end{array}\right ]||_q =||\lambda(u+w)||_q=||u+w||_q.$$
Now, it it is known (see \cite{gowda positive map}, page 52) that $$u+w\prec y.$$ With  $f(\zeta):=||\zeta||_q$ for $\zeta\in \R^n$ and 
$F(a):=f(\lambda(a))=||a||_q$ for $a\in \V$, (\ref{schur convexity}) implies 
$$||u+w||_q\leq ||y||_q.$$ It follows that 
$$||y\circ \varepsilon||_{q}=||u-w||_q=||u+w||_q\leq ||y||_q,$$
proving  (\ref{claim}). 
\\Now, 
\begin{equation}
||x\circ y||_1=\tr(|x\circ y|)=\langle |x\circ y|,e\rangle =\langle (x\circ y)\circ \varepsilon,e\rangle=\langle x,y\circ \varepsilon\rangle,
\end{equation}
where the last equality is due to (\ref{jordan property}). So, by (\ref{vn inequality})  and (\ref{claim}),
\begin{equation}\label{norm equals inner product}
||x\circ y||_1=\langle x,y\circ \varepsilon\rangle\leq \langle \lambda(x),\lambda(y\circ \varepsilon)\rangle 
\leq ||x||_{p}\,||y\circ\varepsilon||_{q}\leq ||x||_{p}\,||y||_{q}.
\end{equation}
This completes the proof of the inequality in the theorem. Now we justify the equality statement.\\
Suppose that $||x\circ y||_1= ||x||_p\,||y||_q.$ From (\ref{norm equals inner product})  
we have $\langle x,y\circ\varepsilon\rangle =\langle \lambda(x),\lambda(y\circ\varepsilon)\rangle$. From Theorem \ref{generalized ft inequality}, we get 
Item $(a)$. 
Item $(b)$ follows from (\ref{norm equals inner product}). 
Conversely, suppose conditions $(a)$ and $(b)$ hold. Then, from Theorem \ref{generalized ft inequality} and (\ref{norm equals inner product}),
$||x\circ y||_1=\langle x,y\circ \varepsilon\rangle=||x||_p\,||y||_q.$
\end{proof}

\gap

\begin{theorem}\label{some p,q norm inequalities}
{\it 
Let $p\in [1,\infty]$ with conjugate $q$. Then the following statements hold in $\V$:
\begin{itemize}
\item [$(i)$] $|\langle x,y\rangle |\leq ||x\circ y||_1\leq ||x||_p\,||y||_q$. 
\item [$(ii)$] 
$\sup_{y\neq 0}\frac{|\langle x,y\rangle|}{||y||_q}=
\sup_{y\neq 0}\frac{||x\circ y||_1}{||y||_q}=||x||_p.$
\item [(iii)] $||x\circ y||_p\leq ||x||_p\,||y||_\infty.$
\end{itemize}
}
\end{theorem}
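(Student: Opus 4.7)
The plan is to prove the three items in order, each leveraging its predecessor together with the adjoint identity (\ref{jordan property}).

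Item $(i)$ is essentially a concatenation of what we already have: the left inequality $|\langle x,y\rangle|\leq ||x\circ y||_1$ is precisely (\ref{ip and one-norm}), and the right inequality is exactly Theorem \ref{main theorem}.

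For $(ii)$, the upper bound $\sup_{y\neq 0}\frac{|\langle x,y\rangle|}{||y||_q}\leq \sup_{y\neq 0}\frac{||x\circ y||_1}{||y||_q} \leq ||x||_p$ is immediate from $(i)$. To see that both suprema actually attain $||x||_p$, I would transport the classical duality identity $\sup_{v\neq 0}|\langle \lambda(x),v\rangle|/||v||_q = ||\lambda(x)||_p$ in $\R^n$ into $\V$. Fix a Jordan frame $\{e_1,\ldots,e_n\}$ with $x=\sum_i \lambda_i(x)e_i$, and for each $v\in \R^n$ set $y_v:=\sum_i v_i e_i$. Since every Jordan frame is orthonormal under the trace inner product and $||\cdot||_q$ is permutation-invariant, $\langle x,y_v\rangle = \langle \lambda(x),v\rangle$ and $||y_v||_q=||v||_q$. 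Picking $v$ to be a H\"{o}lder maximizer in $\R^n$ (with the usual case split at the endpoints $p=1$ and $p=\infty$) realises the first supremum as $||x||_p$, and then the sandwich $||x||_p \leq \sup_{y\neq 0}\frac{||x\circ y||_1}{||y||_q} \leq ||x||_p$ forces the middle supremum to equal $||x||_p$ as well.

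Item $(iii)$ then follows by duality via $(ii)$. Writing
\[
||x\circ y||_p \;=\; \sup_{z\neq 0} \frac{|\langle x\circ y,z\rangle|}{||z||_q},
\]
I would use (\ref{jordan property}) to rewrite the inner product as $\langle y,x\circ z\rangle$, apply the $(\infty,1)$ case of $(i)$ to the pair $y$ and $x\circ z$ to get $|\langle y,x\circ z\rangle|\leq ||y||_\infty\,||x\circ z||_1$, and finally apply Theorem \ref{main theorem} to $x$ and $z$ to bound $||x\circ z||_1 \leq ||x||_p\,||z||_q$. Dividing by $||z||_q$ and taking the sup yields $||x\circ y||_p \leq ||x||_p\,||y||_\infty$. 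The only real bit of casework is the endpoint verification for $(ii)$: at $p=1$ the $\R^n$-maximizer is the sign vector $v=(sgn\,\lambda_1(x),\ldots,sgn\,\lambda_n(x))$, and at $p=\infty$ it is a signed standard basis vector supported on a coordinate where $|\lambda_i(x)|=||x||_\infty$; with those endpoint choices in hand, everything else is formal.
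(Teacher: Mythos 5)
Your proposal is correct and follows essentially the same route as the paper: item $(i)$ by combining (\ref{ip and one-norm}) with Theorem \ref{main theorem}, item $(ii)$ by exhibiting an extremal element built from the Jordan frame of $x$ (your transported H\"{o}lder maximizer $y_v$ is exactly the paper's test element $u$, with the same case split at $p=1$ and $p=\infty$), and item $(iii)$ by duality via (\ref{jordan property}). No gaps.
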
 

\gap

\begin{proof} $(i)$ follows from (\ref{main inequality}) and (\ref{ip and one-norm}).
An immediate consequence of  $(i)$ is: 
\begin{equation} \label{temp inequalities}
\sup_{y\neq 0}\frac{|\langle x,y\rangle|}{||y||_q}\leq  
\sup_{y\neq 0}\frac{||x\circ y||_1}{||y||_q}\leq ||x||_p.
\end{equation}
We now prove the reverse inequalities. Consider the spectral decomposition \\$x=\sum x_ie_i=\sum sgn (x_i)|x_i|e_i$.
First, suppose $p=\infty$. 
Then, for  $1\leq i\leq n$, 
$$|x_i|=|\langle x,e_i\rangle |=\frac{|\langle x,e_i\rangle |}{||e_i||_1}\leq \sup_{y\neq 0}\frac{|\langle x,y\rangle|}{||y||_1},$$ hence $||x||_\infty\leq \sup_{y\neq 0}\frac{|\langle x,y\rangle|}{||y||_1}$. By (\ref{temp inequalities}), the reverse inequality also holds. Thus, $(ii)$ holds when $p=\infty$.\\
Now, let $u:=\sum sgn (x_i)|x_i|^{\frac{p}{q}}e_i$ when $1<p<\infty$ and 
$u:=\sum sgn (x_i)\,e_i$ when $p=1$. We easily verify that 
$|\langle x,u\rangle|=||x\circ u||_1=||x||_p\,||u||_q.$ Thus, the inequalities in (\ref{temp inequalities}) turn into equalities, proving $(ii)$ for $1\leq p<\infty$. 
\\
Now, 
$$||x\circ y||_p=\sup_{z\neq 0}\frac{|\langle x\circ y,z\rangle|}{||z||_q}=\sup_{z\neq 0}\frac{|\langle x\circ z,y\rangle|}{||z||_q}\leq \sup_{z\neq 0}\frac{||x\circ z||_1\,||y||_\infty}{||z||_q}\leq ||x||_p\,||y||_\infty,$$
where the first equality comes from $(ii)$, the first inequality comes from an application of (\ref{lp inequality}), and the second inequality comes from $(ii)$.
This proves $(iii)$.
\end{proof}

\gap

\noindent{\bf Remark 1.} The above result shows that the spectral norms $||\cdot||_p$ and $||\cdot||_q$ on $\V$ are dual to each other.  We also have the following inequalities:
$$||x\circ y||_1\leq ||x||_1\,||y||_\infty\quad \mbox{and}\quad 
||x\circ y||_\infty\leq ||x||_\infty\,||y||_\infty.$$
The first inequality has been observed in \cite{tao-yuan} in a simple algebra setting based on a case-by-case analysis.

\gap

\section{Pointwise inequalities for positive transformations}
Given $a\in \V$, we define the corresponding {\it Lyapunov transformation} $L_a$ and 
{\it quadratic representation} $P_a$ on $\V$ by
$$L_a(x):=a\circ x\quad\mbox{and}\quad P_a(x)=2a\circ (a\circ x)-a^2\circ x\quad (x\in \V).$$
Now, expressed in terms of $L_a$, Theorem \ref{main theorem} says that 
$$||L_a(x)||_1\leq ||x||_p\,||a||_q$$ for all $a,x\in \V$ and $p\in [1,\infty]$ with conjugate $q$.
In this section, we consider such inequalities for quadratic representations and more generally for the so-called positive
transformations. Recall that a linear transformation $P:\V\rightarrow \V$ is said to be a {\it positive transformation} \cite{gowda positive map} 
if
$$x\geq 0\Rightarrow P(x)\geq 0.$$ Writing $P^*$ for the adjoint/transpose
 of a linear transformation $P$, we note that if $P$ is positive, then $P^*$ is also positive as $\langle P^*(z),y\rangle =\langle z,P(y)\rangle \geq 0$ for all $y,z\geq 0$.

\gap

Here are some examples of positive transformations:
\begin{itemize}
\item [$\bullet$] Any nonnegative matrix on the algebra $\Rn$.
\item [$\bullet$] Any quadratic representation $P_a$ on $\V$ \cite{faraut-koranyi}.
\item [$\bullet$] For any $A\in \R^{n\times n}$, the transformation $P$ defined on $\Sn$  by $P(X):=AXA^T$.
\item [$\bullet$] $P=L^{-1}$ on $\V$, where $L:\V\rightarrow \V$ is linear, positive stable (which means that all eigenvalues of $L$ have positive real parts) and satisfies the $Z$-property \cite{gowda-tao-z}:
$$x\geq 0,y\geq 0,\,\langle x,y\rangle =0\Rightarrow \langle L(x),y\rangle \leq 0.$$ Specifically,
\begin{itemize}
\item [$(i)$] On the algebra $\Rn$, $P=A^{-1}$, where $A$ is a positive stable $Z$-matrix (meaning that its off-diagonal entries are nonpositive);
\item [$(ii)$] On the algebra $\Hn$, 
$P=L_{A}^{-1}$, where $A$ is a complex $n\times n$  positive stable matrix and  $L_A(X):=AX+XA^*$. 
The transformation $L_A$ (also called a Lyapunov transformation) appears in dynamical systems.
\end{itemize}
\item [$\bullet$] $P$ is a {\it doubly stochastic transformation} on $\V$ \cite{gowda positive map}. This means that $P$ is positive\\ and $P(e)=e=P^*(e)$. Being a generalization of a doubly stochastic matrix, such a  transformation has the following property (\cite{gowda positive map}, Theorem 6):
$$x=P(y)\Rightarrow x\prec y.$$
\item [$\bullet$] {\it `Schur product'} induced transformation $P$ defined  as follows (\cite{gowda-tao-sznajder}, Proposition 2.2): Fix a positive semidefinite matrix $A=[a_{ij}]\in \Sn$ and a Jordan frame 
 $\{e_1,e_2,\ldots, e_n\}$ in $\V$. Corresponding to this Jordan frame, we write the Peirce decomposition (\cite{faraut-koranyi}, Theorem IV.2.1) of any $x\in \V$:
$x=\sum_{i\leq j}x_{ij}$. Then,  
$$P(x):=A\bullet x=\sum_{i\leq j}a_{ij}x_{ij}.$$
It is known (\cite{gowda positive map}, Example 8) that if such an $A$ has all ones on its diagonal, then $P$ is doubly stochastic.
\end{itemize}

\gap

The following result gives pointwise estimates for positive transformations.

\begin{theorem}\label{theorem for positive transformations}
{\it 
Let $P$ be a positive transformation on $\V$. For any $x\in \V$ and $p\in [1,\infty]$ with conjugate $q$, we have 
\begin{itemize}
\item [$(a)$] 
$||P(x)||_1  \leq ||x||_p\,||P^*(e)||_q.$ In particular, $||P(x)||_1  \leq ||x||_1\,||P^*(e)||_\infty.$
\item [$(b)$] $||P(x)||_p\leq ||x||_\infty\,||P(e)||_p.$ In particular, $||P(x)||_\infty\leq ||x||_\infty\,||P(e)||_\infty.$
\end{itemize}
}
\end{theorem}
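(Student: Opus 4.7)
The plan is to prove (a) directly by a positive/negative part decomposition combined with H\"{o}lder's inequality (Proposition \ref{prop lp inequality}), and then to deduce (b) from (a) by a duality argument based on the identity $||u||_p = \sup_{z\neq 0}|\langle u,z\rangle|/||z||_q$ recorded in Theorem \ref{some p,q norm inequalities}(ii).

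For (a), I would write $x = x_+ - x_-$ with $x_+, x_- \geq 0$ coming from the spectral decomposition of $x$, so that $|x| = x_+ + x_-$. Since $P$ is positive, $P(x_\pm) \geq 0$, and therefore
$$||P(x_\pm)||_1 = \tr(P(x_\pm)) = \langle P(x_\pm), e\rangle = \langle x_\pm, P^*(e)\rangle.$$
The triangle inequality for $||\cdot||_1$ (which is a norm by the Minkowski inequality of Tao et al.\ cited in the paper) then yields
$$||P(x)||_1 \leq ||P(x_+)||_1 + ||P(x_-)||_1 = \langle x_+ + x_-,\, P^*(e)\rangle = \langle |x|,\, P^*(e)\rangle,$$
and Proposition \ref{prop lp inequality} together with the identity $||\,|x|\,||_p = ||x||_p$ produces the stated bound. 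The ``in particular'' clause is just the specialization $p = 1$, $q = \infty$.

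For (b), my plan is to use the duality formula
$$||P(x)||_p = \sup_{z\neq 0}\frac{|\langle P(x),z\rangle|}{||z||_q} = \sup_{z\neq 0}\frac{|\langle x,P^*(z)\rangle|}{||z||_q}$$
from Theorem \ref{some p,q norm inequalities}(ii). I would bound the numerator by $||x||_\infty\,||P^*(z)||_1$ via the $\infty/1$ instance of Proposition \ref{prop lp inequality}, and then invoke part (a) applied to the positive transformation $P^*$ (with the roles of $p$ and $q$ interchanged) to obtain $||P^*(z)||_1 \leq ||z||_q\,||(P^*)^*(e)||_p = ||z||_q\,||P(e)||_p$. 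Cancelling the factor $||z||_q$ and taking the supremum yields the required inequality, and the $p = \infty$ specialization gives the second statement.

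I do not anticipate a genuine obstacle; once (a) is in hand, (b) falls out cleanly from the duality of $||\cdot||_p$ and $||\cdot||_q$. It is worth flagging that a more direct attempt at (b), via the pointwise estimates $0 \leq P(x_\pm) \leq ||x||_\infty P(e)$ together with Weyl-type monotonicity of eigenvalues, only delivers the weaker bound $||P(x)||_p \leq 2||x||_\infty\,||P(e)||_p$; routing through $P^*$ and invoking (a) is what removes the spurious factor of $2$.
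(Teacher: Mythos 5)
Your proposal is correct and follows essentially the same route as the paper: both reduce part $(a)$ to the estimate $||P(x)||_1\leq \langle |x|,P^*(e)\rangle$ and then finish with the H\"{o}lder-type bound, and your part $(b)$ is exactly the paper's duality argument through the positive transformation $P^*$. The only (minor) divergence is the first step of $(a)$: you get $||P(x)||_1\leq ||P(x_+)||_1+||P(x_-)||_1$ from the decomposition $x=x_+-x_-$ and the triangle inequality for the spectral $1$-norm (a nontrivial fact, but one the paper has already cited and may freely use), whereas the paper avoids invoking it by the elementary observation that $u\leq v$ and $-u\leq v$ with $v\geq 0$ imply $||u||_1\leq ||v||_1$, applied with $u=P(x)$ and $v=P(|x|)$.
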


\gap

\begin{proof}
$(a)$ We start with the observation that when $u\leq v$ and $-u\leq v$ in $\V$, we have $||u||_1\leq ||v||_1$. This is easy to see:
Writing the spectral decomposition $u=\sum u_ie_i$, we have $u_i=\langle u,e_i\rangle \leq \langle v,e_i\rangle$ and 
similarly, $-u_i\leq \langle v,e_i\rangle$; thus, $|u_i|\leq \langle v, e_i\rangle$ for all $i$ and so,
$$||u||_1=tr(|u|)\leq \sum \langle v,e_i\rangle =\langle v,e\rangle =tr(v)=||v||_1$$ as $v\geq 0$. Now, for any $x\in \V$, $x\leq |x|$ and $-x\leq |x|$; hence using the positivity of $P$,
$P(x)\leq P(|x|)$ and $-P(x)\leq P(|x|)$ and so
$$||P(x)||_1\leq ||P(|x|)||_1=\Big \langle P(|x|),e\Big\rangle=\Big\langle |x|,P^*(e)\Big\rangle\leq \Big\langle \lambda(|x|),\lambda(P^*(e))\Big\rangle
\leq ||x||_p\,||P^*(e)||_q,$$
where the second inequality comes from (\ref{vn inequality}) and the last inequality is just the classical H\"{o}lder's inequality.
\\$(b)$ Since $P$ is a positive transformation, $P^*$ is also positive. 
Hence, by applying $(a)$ to $P^*$ and $y$, we get $||P^*(y)||_1\leq ||y||_q\,||P(e)||_p.$ Now, 
by an application of (\ref{lp inequality}), we get 
$$||P(x)||_p=\sup_{y\neq 0}\frac{|\langle P(x),y\rangle|}{||y||_q}=\sup_{y\neq 0}\frac{|\langle x, P^*(y)\rangle|}{||y||_q}\leq \sup_{y\neq 0}\frac{||x||_\infty\,||P^*(y)||_1}{||y||_q}\leq ||x||_\infty\,||P(e)||_p.$$
\end{proof}

\gap

Here are some illustrations of the above theorem.

\begin{itemize}
\item [$\bullet$] Let $A\in \R^{n \times n}$ and consider the positive transformation $P$ on $\Sn$ defined by $P(X):=AXA^T$. Then, for any $X\in \Sn$, $P^*(X):=A^TXA$. So, with $e=I$ (the identity matrix), we have the inequalities 
$$||AXA^T||_1\leq ||X||_p\,||A^TA||_q\quad\mbox{and}\quad ||AXA^T||_p\leq ||X||_\infty\,||AA^T||_p.$$
\item [$\bullet$] Let $A=[a_{ij}]\in \Sn$ be positive semidefinite. Then, considering the `Schur product' positive transformation $X\mapsto A\bullet X$, we have, for any $X\in \Sn$,
$$\rho(A\bullet X)=||A\bullet X||_\infty\leq ||X||_\infty\,||A\bullet I||_\infty =||X||_\infty\,(\max_{1\leq i\leq n}|a_{ii}|)=\rho(X)\,\rho(\diag(A)),$$ 
where $\rho(X)$ denotes the spectral radius of $X$ and $\diag(A):=A\bullet I$ with $I$ denoting the identity matrix.
We remark that eigenvalue and spectral radius inequalities for the Schur/Hadamard product have been well-studied in the matrix theory literature.
\end{itemize}

\gap

We now specialize the above result to $P_a$. It is well-known 
that $P_a$ is self-adjoint and positive. Moreover, $P_a(e)=a^2$. 
Hence,  we have the following:
For any $a,x\in \V$ and $p\in [1,\infty]$ with conjugate $q$, 
$$
||P_a(x)||_1\leq  ||x||_p\,||a^2||_q\quad \mbox{and}\quad ||P_a(x)||_p\leq ||x||_\infty\,||a^2||_p.
$$

As we see below, some finer inequalities can be obtained. 

\begin{theorem}\label{majorization between quadratic and Lyapunov}
{\it 
For  any $a,x\in \V$,
\begin{equation}\label{majorization between pa and la}
P_a(x)\prec a^2\circ x.
\end{equation}
Hence, for any  $p\in [1,\infty]$ with conjugate $q$,
\begin{equation} \label{inequality for pa}
||P_a(x)||_1\leq  ||a^2\circ x||_1\leq ||x||_p\,||a^2||_q\quad \mbox{and}\quad ||P_a(x)||_p\leq ||a^2\circ x||_p\leq ||x||_\infty\,||a^2||_p.
\end{equation}
}
\end{theorem}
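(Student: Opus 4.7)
My plan is to exhibit a doubly stochastic transformation $T$ on $\V$ satisfying $T(a^2\circ x)=P_a(x)$; the majorization will then drop out from the fact, cited among the Section~4 examples, that doubly stochastic transformations $D$ on $\V$ satisfy $D(y)\prec y$.

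I would begin by fixing a spectral decomposition $a=\sum a_i e_i$ and passing to the associated Peirce decomposition $\V=\bigoplus_{i\le j}\V_{ij}$, so that every $x\in\V$ admits a unique expansion $x=\sum_{i\le j}x_{ij}$ with $x_{ij}\in\V_{ij}$. A short calculation with $L_{e_k}(x_{ij})=\tfrac12(\delta_{ki}+\delta_{kj})x_{ij}$ yields
\[
L_a^2(x_{ij})=\tfrac{(a_i+a_j)^2}{4}x_{ij},\qquad L_{a^2}(x_{ij})=\tfrac{a_i^2+a_j^2}{2}x_{ij},\qquad P_a(x_{ij})=a_ia_j\,x_{ij}.
\]
Guided by this, I define the matrix $M=[m_{ij}]\in\Sn$ by $m_{ij}=\tfrac{2a_ia_j}{a_i^2+a_j^2}$ when $a_i^2+a_j^2>0$, $m_{ii}=1$ when $a_i=0$, and $m_{ij}=0$ when $i\neq j$ and $a_i=a_j=0$, and let $T(x):=M\bullet x$ be the Schur product transformation of Section~4. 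A Peirce-component check shows $T(a^2\circ x)=P_a(x)$ for every $x$, and $M$ has all ones on its diagonal, so once I know $M$ is positive semidefinite, $T$ will be doubly stochastic by Example~8 of \cite{gowda positive map}.

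The main obstacle is establishing $M\succeq 0$. I would do this by setting $\epsilon_i=\operatorname{sgn}(a_i)$ and, for indices with $a_i\neq 0$, $|a_i|=e^{t_i}$; then the block of $M$ indexed by non-zero eigenvalues takes the form $m_{ij}=\epsilon_i\epsilon_j\operatorname{sech}(t_i-t_j)$, i.e.\ $D_\epsilon\bigl[\operatorname{sech}(t_i-t_j)\bigr]D_\epsilon$ with $D_\epsilon=\diag(\epsilon_i)$. Since the Fourier transform of $\operatorname{sech}$ is the positive function $\pi\operatorname{sech}(\pi\omega/2)$, Bochner's theorem gives positive semidefiniteness of $[\operatorname{sech}(t_i-t_j)]$, which is preserved under diagonal sign-conjugation; the block indexed by zero eigenvalues is the identity, and the cross-block vanishes, so $M$ is a direct sum of positive semidefinite blocks.

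Once $P_a(x)=T(a^2\circ x)\prec a^2\circ x$ is in hand, the norm inequalities of (\ref{inequality for pa}) follow quickly: each $\|\cdot\|_r$ is a symmetric convex function, so the Schur-convexity principle (\ref{schur convexity}) gives $\|P_a(x)\|_r\le\|a^2\circ x\|_r$ for every $r\in[1,\infty]$, while $\|a^2\circ x\|_1\le\|x\|_p\|a^2\|_q$ and $\|a^2\circ x\|_p\le\|x\|_\infty\|a^2\|_p$ are direct applications of Theorem~\ref{main theorem} and Theorem~\ref{some p,q norm inequalities}$(iii)$, respectively.
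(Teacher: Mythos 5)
Your proposal is correct, and it reaches the same essential mechanism as the paper --- both arguments realize $P_a(x)$ as the image of $a^2\circ x$ under the Schur-product transformation $u\mapsto M\bullet u$ with $m_{ij}=\frac{2a_ia_j}{a_i^2+a_j^2}$, and both conclude via the fact that a positive-semidefinite Schur multiplier with unit diagonal gives a doubly stochastic transformation, hence a majorization. The execution differs in three respects, each of which is a genuine (and arguably cleaner) variation. First, the paper obtains the multiplier indirectly, from the Schmieta--Alizadeh identity $P_{a,a^{-1}}P_a=L_{a^2}$ together with an appendix lemma inverting $P_{a,a^{-1}}$ on Peirce components; you instead compute the Peirce action of $P_a$ and $L_{a^2}$ directly, which makes the identity $T(a^2\circ x)=P_a(x)$ transparent and dispenses with the auxiliary lemma. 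Second, because that identity forces $a$ to be invertible, the paper must first reduce to invertible $a$ by a continuity-and-compactness limiting argument; your definition of $M$ on the zero-eigenvalue block (identity diagonal, vanishing off-diagonal and cross-block) handles degenerate $a$ outright, and the verification $m_{ij}\cdot\frac{a_i^2+a_j^2}{2}x_{ij}=a_ia_jx_{ij}$ goes through in every case. Third, for positive semidefiniteness of $M$ the paper exhibits it as a Gram matrix of the functions $\sqrt{2}\,a_ie^{-a_i^2t}$ in $L_2([0,\infty))$ --- a one-line integral --- whereas you factor out the signs and invoke Bochner's theorem for $\operatorname{sech}$; both are valid, though the Gram-matrix computation is the more elementary of the two and extends to the $a_i=0$ entries without change (only the diagonal normalization needs your separate treatment). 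The passage from the majorization to the norm inequalities is identical in both arguments: Schur-convexity of $\|\cdot\|_r\circ\lambda$ followed by Theorem \ref{main theorem} and Theorem \ref{some p,q norm inequalities}$(iii)$.
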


\gap

\begin{proof}
The inequalities in (\ref{inequality for pa}) follow from (\ref{majorization between pa and la}) 
 by an application of 
(\ref{schur convexity}) (with
$f$ denoting the usual $p$-norm on $\Rn$) and Theorem \ref{some p,q norm inequalities}.
We now prove (\ref{majorization between pa and la}). Since $P_a(x)\prec a^2\circ x$ means that 
$\lambda(P_a(x))\prec \lambda(a^2\circ x)$, by continuity of the eigenvalue map $\lambda$ and the compactness of the set of all $n\times n$ doubly stochastic matrices, it is enough to prove (\ref{majorization between pa and la}) when $a$ is invertible (that is, all eigenvalues of $a$ are nonzero). {\it So, assume that $a$ is invertible.} 
Then, we have the formula (see \cite{schmieta-alizadeh}, Lemma 8, Item 3)
$$P_{a,a^{-1}}P_a=L_{a^2},$$
where $P_{a,a^{-1}}:=L_aL_{a^{-1}}+L_{a^{-1}}L_a-L_{a\,\circ\, a^{-1}}.$
By  Lemma \ref{lemma in appendix} in the Appendix, the linear transformation $P_{a,a^{-1}}$  is invertible and 
its inverse, $(P_{a,a^{-1}})^{-1}$, is doubly stochastic. Now, writing  
$$P_a(x)=(P_{a,a^{-1}})^{-1}(a^2\circ x)$$
and invoking Theorem 6 in \cite{gowda positive map}, we see that  $P_a(x)\prec a^2\circ x.$ 

\end{proof} 
 
We now mention some consequences of the majorization inequality (\ref{majorization between pa and la}).
\begin{itemize}
\item [(1)] Writing $\lambda_{\max}(u)$ and $\lambda_{\min}(u)$ for the maximum and minimum of eigenvalues of $u$, we have
$$\lambda_{\max}(P_a(x))\leq \lambda_{\max}(a^2\circ x)\quad \mbox{and}\quad \lambda_{\min}(P_a(x))\geq \lambda_{\min}(a^2\circ x).$$ 
\item [(2)] When $a\geq 0$, we have $P_{\sqrt{a}}(x)\prec a\circ x.$ Such a majorization inequality was proved 
in \cite{meng et al} on a case-by-case basis under the assumptions that $\V$ is simple, $a>0$, $x>0$, and $a\circ x>0$.
\item [(3)] When $a\geq 0$, for any real number $\mu$, $P_{\sqrt{a}}(x)-\mu e\prec a\circ x-\mu e.$ It follows from (\ref{schur convexity}) that for any $p\in [1,\infty]$,
$$||P_{\sqrt{a}}(x)-\mu e||_p\leq ||a\circ x-\mu e||_p.$$
Such inequalities, for $p\in \{2,\infty\}$, appear in interior point methods, see e.g., \cite{schmieta-alizadeh}, Lemma 30.
\end{itemize}

\section{Norms of  Lyapunov transformations, quadratic representations, and positive transformations}
In a recent paper \cite{gowda positive map}, it was shown that for a positive transformation $P$ on a Euclidean Jordan algebra, 
the infinity norm of $P$ is attained at the unit element. This result applies  to the quadratic transformation $P_a$ and the inverse of a positive stable $Z$-transformation \cite{gowda-tao-z} on a Euclidean Jordan algebra. In this section, we consider calculating the norms of the Lyapunov transformation $L_a$, 
the quadratic representation $P_a$, and positive transformations relative to spectral norms. We remark that questions related to the norm of the Lyapunov 
transformation $L_A$ on $\Sn$ (defined by $L_A(X):=AX+XA^T$ for $A\in \R^{n\times n}$) arise in connection with stability of dynamical systems, see  
 \cite{feng et al} and citations therein. 

\gap

Given a linear transformation $T:\V\rightarrow \V$, and $r,s\in [1,\infty]$, we define the norm of the operator 
$T:\,(\V,||\cdot||_r)\rightarrow (\V,||\cdot||_s)$ by
$$||T||_{r\rightarrow s}:=\sup_{x\neq 0}\frac{||T(x)||_{s}}{||x||_r}.$$
By the duality of norms (see Theorem \ref{some p,q norm inequalities}), we immediately see that 
$$||T||_{r\rightarrow s}=||T^*||_{s^\prime\rightarrow r^\prime},$$
where $T^*$ denotes the adjoint/transpose of $T$ and $r^\prime$ ($s^\prime$) denotes the conjugate of $r$ (respectively, of $s$). 
\gap

Now consider the spectral decomposition 
$a=\sum a_ie_i$. Then,
$$L_a(e_i)=a_ie_i\quad\mbox{and}\quad P_a(e_i)=a_i^2e_i$$
for all $i$. For any $r,s\in [1,\infty]$, $||e_i||_r=||e_i||_s=1$ and so,
$|a_i|=||a_ie_i||_s=||a\circ e_i||_s\leq ||L_a||_{r\rightarrow s}\,||e_i||_r=||L_a||_{r\rightarrow s}$. Taking the maximum over $i$, we see that
\begin{equation}\label{lower bound for norm of la}
||a||_\infty\leq ||L_a||_{r\rightarrow s}\quad (r,s\in [1,\infty]).
\end{equation}
Similarly, 
\begin{equation}\label{lower bound for norm of pa}
||a^2||_\infty=||a||_\infty^2\leq ||P_a||_{r\rightarrow s}\quad (r,s\in [1,\infty]).
\end{equation}

\gap

\begin{theorem}\label{la norms}
{\it 
For any $a\in \V$ and $p\in [1,\infty]$ with conjugate $q$, the following statements hold:
\begin{itemize}
\item [$(i)$] $||L_a||_{\infty\rightarrow q}=||L_a||_{p\rightarrow 1}=||a||_q.$
\item [$(ii)$] $||L_a||_{1\rightarrow q}=||L_a||_{p\rightarrow \infty}=||a||_\infty.$
\item [$(iii)$] $||L_a||_{p\rightarrow p}=||a||_\infty.$
\end{itemize}
}
\end{theorem}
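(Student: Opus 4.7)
The plan is to reduce all six identities to a handful of upper-bound estimates supplied by Theorem \ref{some p,q norm inequalities} together with the lower bound (\ref{lower bound for norm of la}), and to exploit the self-adjointness of $L_a$ to halve the work in $(i)$ and $(ii)$.

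First I would observe that $L_a$ is self-adjoint: for any $x,y\in\V$, property (\ref{jordan property}) gives $\langle L_a(x),y\rangle =\langle a\circ x,y\rangle =\langle x,a\circ y\rangle=\langle x,L_a(y)\rangle.$ Combined with the duality relation $||T||_{r\to s}=||T^{*}||_{s'\to r'}$ recorded just before the theorem, this collapses the two equalities in $(i)$ to the single identity $||L_a||_{p\to 1}=||L_a||_{\infty\to q}$ and the two equalities in $(ii)$ to $||L_a||_{1\to q}=||L_a||_{p\to\infty}$. Hence it is enough to compute one norm for each of $(i)$, $(ii)$, $(iii)$.

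For $(i)$, I would read off $||L_a||_{p\to 1}=||a||_q$ as a direct restatement of the second equality in Theorem \ref{some p,q norm inequalities}$(ii)$, applied with the labels $p$ and $q$ interchanged (which is legitimate since they form a symmetric conjugate pair): the supremum $\sup_{y\neq 0}||a\circ y||_1/||y||_p$ equals $||a||_q$. For $(iii)$, Theorem \ref{some p,q norm inequalities}$(iii)$ (applied with the two factors swapped via commutativity of $\circ$) gives $||a\circ x||_p\leq ||a||_\infty\,||x||_p$, hence $||L_a||_{p\to p}\leq ||a||_\infty$; the reverse inequality is precisely (\ref{lower bound for norm of la}). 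For $(ii)$, taking $p=\infty$ in Theorem \ref{some p,q norm inequalities}$(iii)$ gives $||a\circ x||_\infty\leq ||a||_\infty\,||x||_\infty$, and combining this with the elementary bound $||x||_\infty\leq ||x||_p$ yields $||L_a||_{p\to\infty}\leq ||a||_\infty$; the matching lower bound is again (\ref{lower bound for norm of la}).

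No serious obstacle arises here: the task amounts to careful bookkeeping between $p$ and its conjugate $q$ in the appropriate parts of Theorem \ref{some p,q norm inequalities}, and to recognizing that self-adjointness of $L_a$ pairs up the two identities in each of $(i)$ and $(ii)$. Every step is an immediate consequence of results already established in the paper.
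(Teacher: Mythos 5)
Your proposal is correct and follows essentially the same route as the paper: self-adjointness of $L_a$ plus the duality relation handles the first equality in each of $(i)$ and $(ii)$, Theorem \ref{some p,q norm inequalities}$(ii)$ (with the conjugate pair relabelled) gives $(i)$, and Theorem \ref{some p,q norm inequalities}$(iii)$ combined with the lower bound (\ref{lower bound for norm of la}) gives $(ii)$ and $(iii)$. The only cosmetic difference is that the paper cites Remark 1 for the bound $||a\circ x||_\infty\leq ||a||_\infty\,||x||_\infty$ rather than specializing Theorem \ref{some p,q norm inequalities}$(iii)$ to $p=\infty$ as you do.
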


\begin{proof} 
$(i)$ As $L_a$ is self-adjoint, the first equality comes from the
 duality of norms. The second equality is  immediate from Item $(ii)$ in 
 Theorem \ref{some p,q norm inequalities}. 
\\
$(ii)$ The first equality is due to the duality of norms. Now for the second equality. We have, from  Remark 1,  
 $||a\circ x||_\infty \leq ||a||_\infty\,||x||_\infty$. As  $||x||_\infty\leq ||x||_p$, 
we see that  $||a\circ x||_\infty \leq ||a||_\infty\,||x||_p$ 
and so, $||L_a||_{p\rightarrow \infty}\leq ||a||_\infty.$
On the other hand, 
$||a||_\infty\leq ||L_a||_{p\rightarrow \infty}$ from (\ref{lower bound for norm of la}). This proves the equality 
$||L_a||_{p\rightarrow \infty}=||a||_\infty.$
\\
$(iii)$ From Theorem \ref{some p,q norm inequalities}$(iii)$,
$||a\circ x||_p\leq ||x||_p\,||a||_\infty.$
From this, we get $||L_a||_{p\rightarrow p}\leq ||a||_\infty.$
On the other hand, 
$||a||_\infty\leq ||L_a||_{p\rightarrow p}$ from (\ref{lower bound for norm of la}). Thus,  
$||a||_\infty= ||L_a||_{p\rightarrow p}$.
\end{proof}

\gap
\begin{theorem}
{\it 
Let $P$ be a positive transformation on $\V$ and $p\in [1,\infty]$ with conjugate $q$. Then,
\begin{itemize}
\item [$(i)$] $||P||_{\infty\rightarrow p}= ||P(e)||_p$ and $||P||_{p\rightarrow 1}= ||P^*(e)||_q$.
\item [$(ii)$] $||P||_{p\rightarrow \infty}\leq ||P(e)||_{\infty}$ and $||P||_{1\rightarrow p}\leq ||P^*(e)||_{\infty}$.
\end{itemize}
}
\end{theorem}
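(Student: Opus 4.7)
The plan is to derive the theorem as a clean consequence of Theorem \ref{theorem for positive transformations} combined with the duality of spectral norms $||P||_{r\rightarrow s}=||P^*||_{s^\prime\rightarrow r^\prime}$ (with $r^\prime,s^\prime$ the conjugates of $r,s$), together with two elementary observations: first, $||e||_\infty=1$ since $e=e_1+\cdots+e_n$ has all eigenvalues equal to $1$; second, $P^*$ is positive whenever $P$ is, as noted just before Theorem \ref{theorem for positive transformations}.

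For the first equality in $(i)$, the upper bound $||P||_{\infty\rightarrow p}\leq ||P(e)||_p$ is immediate from Theorem \ref{theorem for positive transformations}$(b)$: $||P(x)||_p\leq ||x||_\infty\,||P(e)||_p$ for every $x\in \V$. The matching lower bound follows by evaluating at $x=e$ and using $||e||_\infty=1$. For the second equality, duality gives $||P||_{p\rightarrow 1}=||P^*||_{\infty\rightarrow q}$ (since the conjugate of $1$ is $\infty$ and that of $p$ is $q$), and applying the first equality of $(i)$ to the positive transformation $P^*$ yields $||P^*||_{\infty\rightarrow q}=||P^*(e)||_q$.

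For $(ii)$, specialize Theorem \ref{theorem for positive transformations}$(b)$ to exponent $\infty$ to get $||P(x)||_\infty\leq ||x||_\infty\,||P(e)||_\infty$, then combine with the standard inequality $||x||_\infty\leq ||x||_p$ (valid in $\Rn$ and hence for the spectral norms on $\V$ via $\lambda(x)$) to conclude $||P||_{p\rightarrow \infty}\leq ||P(e)||_\infty$. The second inequality of $(ii)$ follows by one more application of duality and the bound just proved, both invoked for $P^*$: $||P||_{1\rightarrow p}=||P^*||_{q\rightarrow \infty}\leq ||P^*(e)||_\infty$.

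No real obstacle arises; the main analytic content is already packaged in Theorem \ref{theorem for positive transformations}, so the present result is essentially a bookkeeping consequence. A mild interpretive point is that the bounds in $(ii)$ need not be equalities: plugging $x=e$ into the ratio $||P(x)||_\infty/||x||_p$ gives only $||P(e)||_\infty/n^{1/p}$, so there is no natural extremizer available to match the right-hand side, which is why $(ii)$ is stated with inequalities rather than equalities.
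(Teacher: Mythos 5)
Your proof is correct and follows essentially the same route as the paper: both parts are deduced from Theorem \ref{theorem for positive transformations}$(b)$, with the extremizer $x=e$ for the equality in $(i)$ and the inequality $||x||_\infty\leq ||x||_p$ for $(ii)$, and the second statements obtained by duality applied to the positive transformation $P^*$. Your version merely spells out the duality bookkeeping ($||P||_{p\rightarrow 1}=||P^*||_{\infty\rightarrow q}$, etc.) that the paper leaves implicit.
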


\begin{proof}
$(i)$ From Theorem  \ref{theorem for positive transformations}(b), $||P(x)||_p\leq ||x||_\infty\,||P(e)||_p$ with equality when $x=e$. Hence $||P||_{\infty\rightarrow p}=\sup_{x\neq 0}\frac{||P(x)||_p}{||x||_\infty}= ||P(e)||_p$. 
The dual version of this gives the second statement in $(i)$.\\
$(ii)$  
 From Theorem  \ref{theorem for positive transformations}(b), we have $||P(x)||_\infty\leq ||x||_\infty\,||P(e)||_\infty\leq ||x||_p\,||P(e)||_\infty.$ This gives 
$||P||_{p\rightarrow \infty}\leq ||P(e)||_{\infty}$. The second statement is the dual version of this.\\
\end{proof}

\noindent{\bf Remark 2.} The above result shows that for a positive transformation on $\V$, 
$||P||_{1\rightarrow 1}\leq ||P^*(e)||_\infty$ and $||P||_{\infty \rightarrow \infty}\leq ||P(e)||_\infty$. 
Using Theorem \ref{interpolation theorem} (see the next section), for any $p\in [1,\infty]$ we have
$$||P||_{p\rightarrow p}\leq ||P^*(e)||_\infty^{\frac{1}{p}}\,||P(e)||_\infty^{1-\frac{1}{p}}$$
and when $P$ is self-adjoint, $||P||_{p\rightarrow p}\leq ||P(e)||_\infty.$ 
To see a special case, suppose $L:\V\rightarrow \V$ is linear, positive stable, and satisfies the $Z$-property (see Section 3 for definitions). Then, 
$$||L^{-1}||_{p\rightarrow p}\leq ||(L^*)^{-1}(e)||_\infty^{\frac{1}{p}}\,||L^{-1}(e)||_\infty^{1-\frac{1}{p}}.$$
In particular, by taking  $\V=\Hn$ and $L=L_A$ with $A$ positive stable (see Section 3), we can estimate $||L_A^{-1}||_{p\rightarrow p}.$ See \cite{bhatia} for a discussion of this type of  an estimate on the space of all $n\times n$ 
complex matrices.

\gap

For quadratic representations, we can compute the norms precisely. 

\begin{theorem}
For any $a\in \V$ and $p\in [1,\infty]$ with conjugate $q$, the following statements hold:
\begin{itemize}
\item [$(i)$] $||P_a||_{p\rightarrow 1}=||P_a||_{\infty\rightarrow q}= ||a^2||_q.$
\item [$(ii)$] $||P_a||_{1\rightarrow p}=||P_a||_{q\rightarrow \infty}=||a||_\infty^2.$
\item [$(iii)$] $||P_a||_{p\rightarrow p}=||a||_\infty^2.$
\end{itemize}
\end{theorem}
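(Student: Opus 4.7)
The plan is to bootstrap from three ingredients already in hand: the self-adjointness of $P_a$ (giving operator-norm duality $||P_a||_{r\to s} = ||P_a||_{s'\to r'}$), the previous theorem on norms of positive transformations (giving $||P||_{\infty\to p} = ||P(e)||_p$), and the pointwise majorization $P_a(x) \prec a^2 \circ x$ from Theorem \ref{majorization between quadratic and Lyapunov}, which lets me route all upper bounds through the already-computed norms of $L_{a^2}$. The matching lower bounds will all come from the universal estimate $||a||_\infty^2 \leq ||P_a||_{r\to s}$ in (\ref{lower bound for norm of pa}).

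\textbf{Part (i).} I would apply the previous theorem (the norm identities for positive transformations) directly to $P_a$, which is positive. Since $P_a(e) = a^2$, it gives immediately $||P_a||_{\infty \to q} = ||P_a(e)||_q = ||a^2||_q$. Then, using $P_a^* = P_a$ and the duality of operator norms, $||P_a||_{p \to 1} = ||P_a^*||_{\infty \to q} = ||a^2||_q$.

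\textbf{Part (iii).} For the upper bound, Theorem \ref{majorization between quadratic and Lyapunov} gives $||P_a(x)||_p \leq ||a^2 \circ x||_p$, and then Theorem \ref{some p,q norm inequalities}(iii) gives $||a^2\circ x||_p \leq ||a^2||_\infty \,||x||_p = ||a||_\infty^2 \,||x||_p$. Hence $||P_a||_{p\to p} \leq ||a||_\infty^2$. The reverse inequality is (\ref{lower bound for norm of pa}).

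\textbf{Part (ii).} By self-adjointness and duality, $||P_a||_{q\to \infty} = ||P_a||_{1\to p}$, so it suffices to handle one. I would bound $||P_a(x)||_\infty \leq ||a^2 \circ x||_\infty$ via Theorem \ref{majorization between quadratic and Lyapunov}, then apply the $\infty$-$\infty$ inequality $||a^2\circ x||_\infty \leq ||a^2||_\infty\,||x||_\infty$ from Remark 1, and finally use the elementary estimate $||x||_\infty \leq ||x||_q$ (valid in $\R^n$ for every $q \in [1,\infty]$, via $\lambda(x)$) to obtain $||P_a(x)||_\infty \leq ||a||_\infty^2 \,||x||_q$. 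The matching lower bound is again (\ref{lower bound for norm of pa}).

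There is no real obstacle here; the theorem is essentially a corollary of what has been built up. The only point that needs a moment of care is the upper bound in (ii), where one must choose the correct chain of norm comparisons (going through $||\cdot||_\infty$ before converting to $||\cdot||_q$); everything else is mechanical application of duality, the majorization inequality of Theorem \ref{majorization between quadratic and Lyapunov}, and the already-established pointwise inequalities from Section 3.
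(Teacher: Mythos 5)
Your proof is correct, and for parts (ii) and (iii) it takes a genuinely different route from the paper. Part (i) is identical: both apply the positive-transformation norm theorem with $P_a(e)=a^2$ and self-adjointness. For part (ii), the paper simply cites item (ii) of that same theorem ($||P||_{1\to p}\leq ||P^*(e)||_\infty$), whereas you re-derive the bound through the majorization $P_a(x)\prec a^2\circ x$ and Remark 1; both are one-line arguments and the difference is cosmetic. The real divergence is in part (iii): the paper establishes the endpoint equalities $||P_a||_{1\to 1}=||P_a||_{\infty\to\infty}=||a||_\infty^2$ and then invokes the interpolation theorem (Theorem \ref{interpolation theorem}) to get $||P_a||_{p\to p}\leq ||a||_\infty^2$, a forward reference to the $K$-method machinery of Section 6. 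You instead chain $||P_a(x)||_p\leq ||a^2\circ x||_p$ from (\ref{inequality for pa}) with $||a^2\circ x||_p\leq ||a^2||_\infty\,||x||_p$ from Theorem \ref{some p,q norm inequalities}(iii), which needs nothing beyond Sections 3--4. Your argument is therefore more self-contained and avoids the forward dependence on the interpolation theorem; what the paper's route buys is an illustration of that theorem in action (the same pattern it uses in Remark 2 for general positive transformations, where no majorization through $L_{a^2}$ is available). All the ingredients you invoke --- (\ref{lower bound for norm of pa}) for every lower bound, the duality of spectral norms, and $||a^2||_\infty=||a||_\infty^2$ --- are legitimately established before this theorem, so there is no gap.
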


\begin{proof}
$(i)$ Since $P_a$ is self-adjoint and $P_a(e)=a^2$, this comes from the 
previous theorem, Item $(i)$.
\\
$(ii)$ From Item $(ii)$ in the previous theorem, $||P_a||_{1\rightarrow p}=||P_a||_{q\rightarrow \infty}\leq ||a||_\infty^2.$ The reverse inequality follows from (\ref{lower bound for norm of pa}).
\\
As a consequence of Item $(ii)$,
$||P_a||_{1\rightarrow 1}=||a||_\infty^2=||P_a||_{\infty\rightarrow \infty}$. By invoking Theorem \ref{interpolation theorem} (see the next section),  we see that  $||P_a||_{p\rightarrow p}\leq ||a||_\infty^2$. Since the reverse inequality also holds, see  (\ref{lower bound for norm of pa}), we have
$||P_a||_{p\rightarrow p}= ||a||_\infty^2.$
\end{proof}

\section{An interpolation theorem}
In this section, we prove the following  interpolation theorem for a linear transformation on $\V$  with respect to the spectral norms.

\begin{theorem}\label{interpolation theorem}
{\it 
Suppose $1\leq r,s,p\leq \infty$,  $0\leq \theta\leq 1$, and 
\begin{equation}\label{r,s,p}
\frac{1}{p}=\frac{1-\theta}{r}+\frac{\theta}{s}.
\end{equation}
Then, for  any linear transformation $T:\V\rightarrow \V$ we have
\begin{equation}\label{interpolation inequality}
||T||_{p\rightarrow p}\leq ||T||_{r\rightarrow r}^{1-\theta}\,\,||T||_{s\rightarrow s}^{\theta}.
\end{equation}
In particular,
$$||T||_{p\rightarrow p}\leq ||T||_{1\rightarrow 1}^{\frac{1}{p}}\,\,||T||_{\infty\rightarrow \infty}^{1-\frac{1}{p}}.
$$
}
\end{theorem}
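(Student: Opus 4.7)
Set $M_r := ||T||_{r \rightarrow r}$ and $M_s := ||T||_{s \rightarrow s}$; we may assume $r \neq s$ and $0 < \theta < 1$, the remaining cases being trivial. The plan is to apply the $K$-method of real interpolation theory to the compatible couple $\V_r := (\V, ||\cdot||_r)$ and $\V_s := (\V, ||\cdot||_s)$, and to reduce the argument to classical $\ell^p$-interpolation on $\R^n$ via the eigenvalue map $\lambda$.

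First I would introduce the $K$-functional
$$K(t, x) := \inf\bigl\{||u||_r + t\,||v||_s : u, v \in \V,\ x = u + v\bigr\}, \qquad t > 0,\ x \in \V.$$
Given any splitting $x = u + v$, linearity of $T$ yields $Tx = Tu + Tv$, so $||Tu||_r + t\,||Tv||_s \leq M_r\bigl(||u||_r + (tM_s/M_r)\,||v||_s\bigr)$; taking the infimum gives $K(t, Tx) \leq M_r\, K(tM_s/M_r, x)$. Substituted into the integrated norm $||x||_{\theta, p} := \bigl(\int_0^\infty (t^{-\theta} K(t, x))^p\, dt/t\bigr)^{1/p}$ and combined with the change of variable $t \mapsto tM_s/M_r$ (which preserves $dt/t$), this produces the abstract interpolation-norm bound $||Tx||_{\theta, p} \leq M_r^{1-\theta} M_s^\theta\,||x||_{\theta, p}$.

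The key technical step is the identity
$$K(t, x;\, \V_r, \V_s) = K(t, \lambda(x);\, \ell^r, \ell^s) \qquad (x \in \V,\ t > 0), \qquad (\ast)$$
which expresses the $\V$-level $K$-functional in terms of the classical $\R^n$ $K$-functional of the eigenvalue vector. The easy $\leq$ direction uses diagonal splittings: given $\lambda(x) = a + b$ in $\R^n$ and a spectral decomposition $x = \sum_i \lambda_i(x)\,e_i$, the lift $u := \sum_i a_i e_i$, $v := \sum_i b_i e_i$ recovers the same component norms in $\V$. I expect the reverse direction to be the main obstacle. My approach would be: given a general splitting $x = u + v$ in $\V$, invoke the Ky Fan / Lidskii majorization $\lambda(u+v) \prec \lambda(u) + \lambda(v)$ (both vectors arranged in decreasing order), write $\lambda(x) = A\bigl(\lambda(u) + \lambda(v)\bigr)$ for some doubly stochastic matrix $A$ by Hardy--Littlewood--P\'olya, and set $a := A\lambda(u)$, $b := A\lambda(v)$; the bounds $||a||_r \leq ||u||_r$ and $||b||_s \leq ||v||_s$ then follow from the fact that doubly stochastic matrices are $\ell^p$-norm-decreasing for every $p \in [1, \infty]$, itself a direct consequence of the Schur-convexity of the $\ell^p$-norms (see (\ref{schur convexity})). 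This majorization argument is parallel to the one used in the proof of Theorem \ref{main theorem}.

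Granted $(\ast)$, the problem reduces to the classical real-interpolation identification $(\ell^r, \ell^s)_{\theta, p} = \ell^p$ under the hypothesis (\ref{r,s,p}) (see \cite{lunardi}), which together with the abstract bound above transfers into the spectral-$p$-norm inequality $||Tx||_p \leq M_r^{1-\theta} M_s^\theta\, ||x||_p$, proving (\ref{interpolation inequality}). The particular case of the theorem is then obtained by specializing to $r = 1$, $s = \infty$, and $\theta = 1 - 1/p$.
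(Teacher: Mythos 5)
Your proposal follows essentially the same route as the paper: the $K$-method applied to the couple $\V_r=(\V,||\cdot||_r)$, $\V_s=(\V,||\cdot||_s)$, the operator bound $K(t,Tx)\leq M_r\,K(tM_s/M_r,x)$, the identification $K(t,x;\V_r,\V_s)=K(t,\lambda(x);\ell^r,\ell^s)$ proved by lifting splittings through a spectral decomposition in one direction and by majorization in the other, and finally the $K$-functional description of the $\ell^p$-norm. One caveat worth fixing: in the reverse direction of $(\ast)$ you invoke the single-matrix Lidskii-type majorization $\lambda(u+v)\prec\lambda(u)+\lambda(v)$, which the paper explicitly records as known only for \emph{simple} algebras; for a general (non-simple) $\V$ the paper instead uses $\lambda(u+v)=A\lambda(u)+B\lambda(v)$ with two possibly different doubly stochastic matrices (Proposition \ref{majorization}), and your argument goes through verbatim with $a:=A\lambda(u)$, $b:=B\lambda(v)$, so the gap is only one of citation. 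Also note that to obtain (\ref{interpolation inequality}) with constant $1$ you need the \emph{exact} norm identity of Proposition \ref{equality in lp} (which requires ordering $r<s$, the case $s<r$ being handled by swapping $\theta$ and $1-\theta$), not merely the Banach-space identification $(\ell^r,\ell^s)_{\theta,p}=\ell^p$ up to equivalent norms, which would leave a spurious multiplicative constant.
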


There are numerous interpolation theorems in analysis, two classical
 ones being the Riesz-Thorin and Marcinkiewicz 
interpolation theorems.  The interpolation theorems are usually proved using either the real or the complex methods. 
We present a proof of the above theorem using the $K$-method of real interpolation theory \cite{lunardi}. 
While the above result deals with the norm of $T$ relative to the same spectral norm (such as $||T||_{p\rightarrow p}$),
 we  anticipate a broader result similar to the Riesz-Thorin theorem that deals with the norm of $T$ relative to two spectral norms (such as $||T||_{p_0\rightarrow p_1}$). 
We note that a Riesz-Thorin type result is available for linear transformations
 on the space of complex $n\times n$ matrices with respect to Schatten $p$-norms, see the interpolation theorem of Calder\'{o}n-Lions (\cite{reed-simon}, Theorem IX.20). A key idea in our proof is the use of a majorization result that connects a 
$K$-functional defined on $\V$ to a $K$-functional on an $L_p$-space.
\\

Before presenting the proof, we describe some background material.
Corresponding to our Euclidean Jordan algebra $\V$ of rank $n$, we let $\Omega:=\{1,2,\ldots, n\}$
and  $\mu$ denote the  measure on (the power set of) $\Omega$ with $\mu(\{k\})=1$ for all $k\in \Omega$. Let  $L_{p}(\Omega)$ (abbreviated as $L_p$) denote the corresponding Lebesgue measure space (consisting, for our consideration,
 only of real valued functions).
We regard any element $f$ in $L_{p}(\Omega)$ either as an $n$-tuple or as a real valued function on $\Omega$. We let $||f||_p$ denote the usual $p$-norm of $f$.  
{\it We assume the notation/conditions of Theorem \ref{interpolation theorem}.}
We will use the following abbreviations:
$$V_r:=(V,||\cdot||_r),\,\,V_s:=(V,||\cdot||_s),\,\,M_r:=||T||_{r\rightarrow r},\,\,\mbox{and}\,\,M_s:=||T||_{s\rightarrow s}.$$ 
For any real number  $t>0$, $x\in \V$, and $f:\Omega\rightarrow \R$, we consider all possible decompositions $x=a+b$  with $a,b\in \V$ and $f=g+h$ with $g,h:\Omega\rightarrow \R$, and define the $K$-functionals:
$$K(t,x,V_r,V_s):=\inf\Big \{||a||_r+t\,||b||_s:x=a+b\Big \},$$
and 
$$K(t,f,L_r,L_s):=\inf\Big \{||g||_r+t\,||h||_s:f=g+h\Big \}.$$

We describe/recall some preliminary results.

\begin{proposition} (\cite{lunardi}, Definition 1.2 and Example 1.27)\label{equality in lp}
{\it 
Suppose $1\leq r, s,p\leq \infty$, $r< s$,  $0< \theta< 1$, and
$$\frac{1}{p}=\frac{1-\theta}{r}+\frac{\theta}{s}.
$$
Then, for any $f:\Omega\rightarrow \R$,
$$||f||_p=\Big [\int_{0}^{\infty} \Big ( t^{-\theta}K(t,f,L_r,L_s)\Big )^p\frac{dt}{t}\Big ]^{\frac{1}{p}}.$$
}
\end{proposition}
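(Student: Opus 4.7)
The plan is to reduce the identity to a direct computation in terms of the decreasing rearrangement of $f$, once the $K$-functional is expressed in closed form.

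Let $f^* = (a_1, a_2, \ldots, a_n)$ with $a_1 \geq a_2 \geq \cdots \geq a_n \geq 0$ denote the decreasing rearrangement of $|f|$, viewed as a nonincreasing step function on $(0, n]$; note that $||f||_p^p = \sum_{i=1}^n a_i^p$ for $p<\infty$. The first step is to derive Holmstedt's formula, which in our discrete setting reads
$$K(t, f, L_r, L_s) = \Big(\sum_{i \leq N(t)} a_i^r\Big)^{1/r} + t\,\Big(\sum_{i > N(t)} a_i^s\Big)^{1/s}$$
for a suitable threshold index $N(t)$ depending monotonically on $t$, with the usual modifications when $r$ or $s$ equals $\infty$. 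I would prove this identity in two directions: the upper bound by exhibiting the decomposition $f = g + h$ in which $g$ collects the largest entries (up to index $N(t)$) and $h$ the remaining ones; the matching lower bound by invoking the Hardy--Littlewood rearrangement inequality, which forces any competing decomposition to be no better than this truncation.

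Next, armed with Holmstedt's formula, I would substitute into the right-hand side integral and perform a change of variables that aligns the threshold $N(t)$ with a running parameter, converting
$$\int_0^\infty \Big(t^{-\theta} K(t, f, L_r, L_s)\Big)^p \frac{dt}{t}$$
into an integral that reproduces $\sum_{i=1}^n a_i^p$. The exponent relation $\frac{1}{p} = \frac{1-\theta}{r} + \frac{\theta}{s}$ enters precisely at this stage to guarantee that the powers of $t$ cancel correctly against the norms $\bigl(\sum_{i \leq N(t)} a_i^r\bigr)^{1/r}$ and $\bigl(\sum_{i > N(t)} a_i^s\bigr)^{1/s}$ after the substitution, and also to keep the integral convergent at both endpoints.

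The main obstacle will be obtaining the exact equality stated in the proposition rather than only a two-sided equivalence. Most textbook versions of Holmstedt's theorem yield only equivalence of norms with constants depending on $r$, $s$, $\theta$; recovering the clean identity requires the specific normalization of the $K$-functional used in Lunardi, together with careful bookkeeping of the discrete structure of $\Omega$. An alternative route, likely simpler in practice, is to verify first the endpoint case $r=1$, $s=\infty$, where the elementary identity $K(t, f, L^1, L^\infty) = \int_0^t f^*(u)\,du$ is sharp by a direct minimization argument, and then bootstrap to the general case via the reiteration theorem of real interpolation theory.
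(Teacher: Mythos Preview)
The paper does not give a proof of this proposition at all; it is quoted as a known result from Lunardi's monograph (Definition~1.2 and Example~1.27 there) and used as a black box in the proof of Theorem~\ref{interpolation theorem}. So there is nothing in the paper to compare your argument against.

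As for your plan itself: the route through Holmstedt's formula is indeed the standard way to identify $(L_r,L_s)_{\theta,p}$ with $L_p$, and your outline of the two directions (explicit truncation for the upper bound, rearrangement inequality for the lower bound) is correct in spirit. The difficulty you already flag is the real one. Holmstedt's theorem, even in the sharp discrete form you write down, does \emph{not} in general produce the exact identity
\[
\|f\|_p \;=\; \Big[\int_0^\infty \big(t^{-\theta}K(t,f,L_r,L_s)\big)^p\,\frac{dt}{t}\Big]^{1/p};
\]
it produces a two-sided equivalence with constants depending on $r,s,\theta$. Your ``alternative route'' through the endpoint case $r=1$, $s=\infty$ has the same issue: the formula $K(t,f,L_1,L_\infty)=\int_0^t f^*(u)\,du$ is exact, but the subsequent Hardy-type integral only returns $\|f\|_p$ up to a multiplicative constant, and the reiteration theorem again introduces constants. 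So as written your plan would establish norm equivalence, not the equality stated. Whether the proposition is literally true as an equality (rather than an equivalence) hinges on the precise normalization Lunardi adopts for the $K$-functional norm; you would have to check the cited reference directly. Note, too, that this matters for the paper's application: in the proof of Theorem~\ref{interpolation theorem} the proposition is invoked once for $T(x)$ and once for $x$, so a mere equivalence would leave a residual constant in the interpolation inequality~(\ref{interpolation inequality}).
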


\gap

The following can be regarded as a majorization result. In a simple Euclidean Jordan algebra, it is known that $\lambda(a+b)\prec \lambda(a)+\lambda(b)$ \cite{gowda-tao-cauchy} so that $\lambda(a+b)=A(\lambda(a)+\lambda(b))$ for some doubly stochastic matrix $A$. For a general Euclidean Jordan algebra, we have the following.

\begin{proposition} (\cite{jeong-gowda}, Proposition 8) \label{majorization}
{\it Given $a,b\in \V$, there exist doubly stochastic matrices $A$ and $B$ in $\R^{n\times n}$ such that 
$$\lambda(a+b)=A\lambda(a)+B\lambda(b).$$
}
\end{proposition}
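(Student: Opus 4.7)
The plan is to construct $A$ and $B$ directly as matrices whose entries are pairings of primitive idempotents drawn from ordered spectral decompositions of $a$, $b$, and $a+b$, rather than appealing to the classical majorization inequality $\lambda(a+b)\prec\lambda(a)+\lambda(b)$ and a Birkhoff representation (which would give $A=B$).

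I would fix ordered spectral decompositions
$$a=\sum_{j=1}^{n}\lambda_j(a)\,e_j,\qquad b=\sum_{j=1}^{n}\lambda_j(b)\,g_j,\qquad a+b=\sum_{i=1}^{n}\lambda_i(a+b)\,f_i,$$
where $\{e_j\}$, $\{g_j\}$, and $\{f_i\}$ are Jordan frames. Since the preliminaries record that every Jordan frame is orthonormal under the trace inner product, taking the inner product of $a+b$ with $f_i$ extracts its $i$-th eigenvalue, giving
$$\lambda_i(a+b)=\langle a+b,f_i\rangle=\sum_{j}\lambda_j(a)\,\langle e_j,f_i\rangle+\sum_{j}\lambda_j(b)\,\langle g_j,f_i\rangle.$$
Defining $A_{ij}:=\langle e_j,f_i\rangle$ and $B_{ij}:=\langle g_j,f_i\rangle$ then yields $\lambda(a+b)=A\lambda(a)+B\lambda(b)$ by construction, so only the doubly-stochastic property remains.

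The row and column sums are immediate from $\sum_{j}e_j=e=\sum_{i}f_i$: for each $i$,
$$\sum_{j}A_{ij}=\Big\langle\sum_{j}e_j,\,f_i\Big\rangle=\langle e,f_i\rangle=\tr(f_i)=1,$$
and for each $j$,
$$\sum_{i}A_{ij}=\Big\langle e_j,\,\sum_{i}f_i\Big\rangle=\langle e_j,e\rangle=\tr(e_j)=1,$$
with an identical computation for $B$. The only step that needs justification is nonnegativity of the entries: for this I would invoke self-duality of the positive cone $\V_+$ under the trace inner product (a standard property of Euclidean Jordan algebras, cf.\ \cite{faraut-koranyi}), which gives $\langle e_j,f_i\rangle\geq 0$ and $\langle g_j,f_i\rangle\geq 0$ because each primitive idempotent lies in $\V_+$. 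Alternatively, one can note that $P_{e_j}(e)=e_j$ and use self-adjointness plus positivity of $P_{e_j}$ to write $\langle e_j,f_i\rangle=\tr\bigl(P_{e_j}(f_i)\bigr)\geq 0$.

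The main (modest) obstacle is only this nonnegativity step; beyond that, the argument is a few lines of bookkeeping that exploit orthonormality of Jordan frames in the trace inner product and the identity $\sum_j e_j=\sum_i f_i=e$. Crucially, no reduction to the five types of simple algebras and no case analysis are needed, and the construction naturally produces $A$ and $B$ depending on both $a$ and $b$ through the Jordan frame of $a+b$.
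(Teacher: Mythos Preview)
The paper does not supply a proof of this proposition; it is quoted from \cite{jeong-gowda} as a known result and used as a black box in the proof of Lemma~\ref{equality of two Ks}. Your argument is correct and is, in fact, the standard construction: with $A_{ij}:=\langle e_j,f_i\rangle$ and $B_{ij}:=\langle g_j,f_i\rangle$, the identity $\lambda(a+b)=A\lambda(a)+B\lambda(b)$ follows from orthonormality of the Jordan frame $\{f_i\}$ under the trace inner product, the row and column sums are $\langle e,f_i\rangle=\tr(f_i)=1$ and $\langle e_j,e\rangle=\tr(e_j)=1$, and nonnegativity of the entries is exactly self-duality of the symmetric cone applied to the primitive idempotents $e_j,f_i\in\V_+$. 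There is no gap.

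One small remark: your parenthetical about the alternative route via $\lambda(a+b)\prec\lambda(a)+\lambda(b)$ and Birkhoff (which would give $A=B$) is only available when $\V$ is simple; in a general Euclidean Jordan algebra that majorization can fail, which is precisely why the paper singles out this two-matrix version. Your direct construction sidesteps the issue entirely and works uniformly, so the comment is harmless but slightly misleading.
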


\gap

Based on the above majorization result, we connect the two $K$-functionals defined earlier.
 
\begin{lemma}\label{equality of two Ks}
{\it For any $t>0$ and $x\in \V$, we have
$$K(t,x,V_r,V_s)=K(t,\lambda(x), L_r,L_s).$$
}
\end{lemma}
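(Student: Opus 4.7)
The plan is to prove the equality $K(t,x,V_r,V_s) = K(t,\lambda(x),L_r,L_s)$ by establishing the two inequalities separately: one direction is a direct construction using a spectral decomposition of $x$, while the other relies on the nontrivial majorization identity of Proposition \ref{majorization}.

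For the inequality $K(t,x,V_r,V_s) \leq K(t,\lambda(x),L_r,L_s)$, I would fix a spectral decomposition $x = \sum_i \lambda_i(x)\,e_i$, where $\{e_1,\ldots,e_n\}$ is a Jordan frame, and use it to lift any scalar decomposition $\lambda(x) = g+h$ in $\R^n$ back into $\V$. Setting $a := \sum_i g_i\, e_i$ and $b := \sum_i h_i\, e_i$, one immediately has $a+b = x$ from $g_i + h_i = \lambda_i(x)$. Since $\{e_1,\ldots,e_n\}$ is a Jordan frame, the scalars $g_1,\ldots,g_n$ are precisely the eigenvalues of $a$, so $\lambda(a)$ is a permutation of the vector $g$; by permutation-invariance of the spectral $r$-norm, $||a||_r = ||g||_r$, and analogously $||b||_s = ||h||_s$. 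Passing to the infimum over decompositions of $\lambda(x)$ yields the first inequality.

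For the reverse inequality $K(t,\lambda(x),L_r,L_s) \leq K(t,x,V_r,V_s)$, I would begin with any decomposition $x = a+b$ in $\V$ and invoke Proposition \ref{majorization} to obtain doubly stochastic matrices $A,B \in \R^{n\times n}$ such that $\lambda(x) = A\lambda(a) + B\lambda(b)$. Taking $g := A\lambda(a)$ and $h := B\lambda(b)$ produces an admissible decomposition of $\lambda(x)$ in $\R^n$. Since $A$ is doubly stochastic, $g \prec \lambda(a)$, and applying the Schur-convexity implication (\ref{schur convexity}) to the permutation-invariant convex function $||\cdot||_r$ on $\R^n$ gives $||g||_r \leq ||\lambda(a)||_r = ||a||_r$; the same reasoning yields $||h||_s \leq ||b||_s$. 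Summing with weight $t$ and taking the infimum gives the desired bound.

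The main obstacle is this second direction: a generic decomposition $x = a+b$ in $\V$ does not simultaneously diagonalize $a$ and $b$ on a common Jordan frame, so one cannot simply read off a coordinate-wise decomposition of $\lambda(x)$ from $\lambda(a)$ and $\lambda(b)$. The essential tool bridging the gap is Proposition \ref{majorization}, which encodes the obstruction as a pair of doubly stochastic redistributions that are absorbed by the Schur-convexity of the $p$-norm; the first direction, by contrast, is a purely constructive lifting along a fixed Jordan frame of $x$.
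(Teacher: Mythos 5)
Your proof is correct and follows essentially the same route as the paper: the lifting of a scalar decomposition of $\lambda(x)$ along a fixed Jordan frame for one inequality, and Proposition \ref{majorization} with $g:=A\lambda(a)$, $h:=B\lambda(b)$ for the other. The only cosmetic difference is that you justify $\|A\lambda(a)\|_r\leq\|\lambda(a)\|_r$ via Schur-convexity, while the paper cites Birkhoff's theorem directly; these amount to the same fact.
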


\begin{proof}
We fix $x\in \V$ and consider the decomposition $x=a+b$. By Proposition \ref{majorization},
$$\lambda(x)=A\lambda(a)+B\lambda(b),$$
where $A$ and $B$ are doubly stochastic matrices in $\R^{n\times n}$.
Let $g:=A\lambda(a)$ and $h:=B\lambda(b)$ so that 
$$\lambda(x)=g+h.$$
As $A$ and $B$ are convex combinations of permutation matrices (by Birkhoff's Theorem \cite{bhatia-book}), we see that $||g||_r\leq ||\lambda(a)||_r=||a||_r$ and $||h||_s\leq ||\lambda(b)||_s=||b||_s.$ Hence, for any $t>0$,
$$K(t,\lambda(x),L_r,L_s)\leq ||g||_r+t\,||h||_s\leq ||a||_r+t\,||b||_s.$$
As this holds for any decomposition $x=a+b$, taking the infimum, 
$$K(t,\lambda(x), L_r,L_s)\leq K(t,x,V_r,V_s).$$
Now for the reverse inequality. Consider any decomposition $\lambda(x)=g+h$ where $g,h:\Omega\rightarrow \R.$ Corresponding to the spectral decomposition $x=\sum \lambda_i(x)e_i$, we define 
$$a:=\sum g(i)e_i\quad\mbox{and}\quad b:=\sum h(i)e_i.$$
Then, $x=a+b$ in $\V$. So,
$$K(t,x,V_r,V_s)\leq ||a||_r +t\,||b||_s=||g||_r+t\,||h||_s.$$
Taking the infimum, we get
$$K(t,x,V_r,V_s)\leq K(t,\lambda(x), L_r,L_s).$$
This completes the proof of the lemma.
\end{proof}

\gap

\begin{lemma} \label{inequality of Ks}
{\it 
Let $T:\V\rightarrow \V$ be linear and nonzero. Then,
$$K(t,T(x),V_r,V_s)\leq M_r\,K\Big (\frac{M_s}{M_r}t,x,V_r,V_s\Big ).$$
}
\end{lemma}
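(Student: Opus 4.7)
The plan is to exploit the definition of the $K$-functional together with the linearity of $T$ and the two operator-norm bounds $\|T(a)\|_r \le M_r\|a\|_r$ and $\|T(b)\|_s \le M_s\|b\|_s$. First I would fix an arbitrary decomposition $x = a+b$ with $a,b\in\V$ and note that by linearity $T(x)=T(a)+T(b)$, so this decomposition of $T(x)$ is admissible in the infimum defining $K(t,T(x),V_r,V_s)$.

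Next I would apply the two norm estimates to get
\begin{equation*}
K(t,T(x),V_r,V_s)\;\le\;\|T(a)\|_r+t\,\|T(b)\|_s\;\le\;M_r\|a\|_r+t\,M_s\|b\|_s.
\end{equation*}
The key algebraic step is to factor out $M_r$ (which is nonzero since $T\ne 0$ and, more carefully, since the right-hand side of the claimed inequality would be vacuously true if $M_r=0$; this edge case should be handled separately). Factoring gives
\begin{equation*}
M_r\|a\|_r+t\,M_s\|b\|_s \;=\; M_r\Big(\|a\|_r+\tfrac{M_s}{M_r}\,t\,\|b\|_s\Big).
\end{equation*}

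Finally, since $a,b$ ran over an arbitrary decomposition of $x$, I take the infimum on the right, which by the very definition of the $K$-functional evaluates to $M_r\,K(\tfrac{M_s}{M_r}t,x,V_r,V_s)$. This yields the stated inequality. The only minor subtlety is the degenerate case $M_r=0$ (respectively $M_s=0$), where $T=0$ on $V_r$ (resp.\ $V_s$); in either case $T$ is identically zero on $\V$ (because $V_r$ and $V_s$ share the same underlying vector space $\V$), so both sides vanish and the inequality holds trivially. There is no real obstacle here — the lemma is essentially a direct translation of the operator-norm hypotheses through the definition of the $K$-functional, with no need to invoke Proposition~\ref{majorization} or Lemma~\ref{equality of two Ks}.
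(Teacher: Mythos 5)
Your proof is correct and is essentially identical to the paper's: fix a decomposition $x=a+b$, use linearity of $T$ and the bounds $\|T(a)\|_r\le M_r\|a\|_r$, $\|T(b)\|_s\le M_s\|b\|_s$, factor out $M_r$, and take the infimum. Your remark on the degenerate case $M_r=0$ is a harmless extra (the hypothesis $T\neq 0$ already rules it out, since $\|\cdot\|_r$ is a genuine norm on $\V$).
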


\begin{proof} Fix $x\in \V$ with decomposition $x=a+b$. Then, $T(x)=T(a)+T(b)$ and so,
$$K(t,T(x),V_r,V_s)\leq ||T(a)||_r+t\,||T(b)||_s\leq M_r||a||_r+t\,M_s||b||_s\leq M_r\Big [||a||_r+t\frac{M_s}{M_r}||b||_s\Big ].$$
Taking the infimum over all decompositions $x=a+b$, we get the stated inequality.
\end{proof}

We now come to the proof of Theorem \ref{interpolation theorem}.

\gap
 
\begin{proof} As (\ref{interpolation inequality}) holds when $T=0$ or $r=s$ or when $\theta\in \{0,1\}$, we assume that 
 $T\neq 0$, $r\neq s$, and $0<\theta<1$. We first assume that $r<s$ (so that conditions of Proposition \ref{equality in lp} are met).\\   
We fix $x\in \V$ and let $y:=T(x)$. Then, 
$||y||_p=||\lambda(y)||_p$ and 
$$\begin{array}{rcl}
||\lambda(y)||_p^p & = & \int_{0}^{\infty} \Big [t^{-\theta}K(t,\lambda(y),L_r,L_s)\Big ]^p\frac{dt}{t}\\
          &=& \int_{0}^{\infty} \Big [ t^{-\theta}K(t,y,V_r,V_s)\Big ]^p\frac{dt}{t}\\
          &\leq & \int_{0}^{\infty} \Big [ t^{-\theta}M_r\,K\Big(\frac{M_s}{M_r}t,x,V_r,V_s\Big )\Big ]^{p}\frac{dt}{t}\\
          &=& \int_{0}^{\infty}\Big [ \Big (\frac{M_r}{M_s}t\Big )^{-\theta}M_r K(t,x,V_r,V_s)\Big ]^p
\frac{dt}{t}\\
&=& \Big (\frac{M_r}{M_s}\Big )^{-\theta\,p}M_r^{p}\int_{0}^\infty \Big [t^{-\theta}K(t,x,V_r,V_s)\Big ]^p\frac{dt}{t}\\
&=& M_r^{p(1-\theta)}M_s^{p\theta}\,||x||_p^{p},
\end{array} 
$$
where the first equality is due to Proposition \ref{equality in lp},  the second equality is due to Lemma 
\ref{equality of two Ks}, the first inequality is due to Lemma \ref{inequality of Ks}, and the third equality is due to a change of variable.
Also, the last equality is seen by applying Lemma \ref{equality of two Ks} and  Proposition \ref{equality in lp}. 
Hence,
$$||T(x)||_p\leq M_r^{1-\theta}M_s^{\theta}\,||x||_p.$$
This implies that 
$$||T||_{p\rightarrow p}\leq ||T||_{r\rightarrow r}^{1-\theta}\,||T||_{s\rightarrow s}^{\theta}.$$
Note that we proved this inequality  under the assumption that $r<s$. When $s<r$, we let $\phi:=1-\theta$ and observe that $\frac{1}{p}=\frac{1-\phi}{s}+\frac{\phi}{r}$. Then, by what has been proved, 
$$||T||_{p\rightarrow p}\leq ||T||_{s\rightarrow s}^{1-\phi}\,||T||_{r\rightarrow r}^{\phi}=||T||_{r\rightarrow r}^{1-\theta}\,||T||_{s\rightarrow s}^{\theta}.$$
We thus have (\ref{interpolation inequality}) in all cases. In particular, by putting 
$r=1$ and $s=\infty$ we  get 
$$||T||_{p\rightarrow p}\leq ||T||_{1\rightarrow 1}^{\frac{1}{p}}\,||T||_{\infty\rightarrow \infty}^{1-\frac{1}{p}}.$$
\end{proof}
\section{Appendix}

For $a,b\in \V$, we define (see \cite{schmieta-alizadeh}, page 4 or \cite{faraut-koranyi}, page 32)
$$P_{a,b}:=L_aL_b+L_bL_a-L_{a\,\circ\, b}.$$ 

\begin{lemma}\label{lemma in appendix}
Suppose $a\in \V$ is invertible. Then, the linear transformation $P_{a,a^{-1}}$ is invertible and 
$(P_{a,a^{-1}})^{-1}$
is doubly stochastic.
\end{lemma}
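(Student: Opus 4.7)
The plan is to diagonalize $P_{a,a^{-1}}$ simultaneously on the Peirce pieces of $\V$ determined by a Jordan frame that contains $a$ in its span. Write the spectral decomposition $a=\sum_{i=1}^{n}a_ie_i$ with all $a_i\neq 0$, so that $a^{-1}=\sum_i a_i^{-1}e_i$ and $a\circ a^{-1}=e$. Using the Peirce relations $e_i\circ u=\tfrac{1}{2}u$ for $u\in \V_{ij}$ with $i\neq j$ and $e_k\circ u=0$ for $k\notin\{i,j\}$ (and trivially $a\circ e_i=a_ie_i$ on $\V_{ii}=\R e_i$), one checks that each of $L_a$, $L_{a^{-1}}$, and $L_{a\circ a^{-1}}=I$ leaves every Peirce component $\V_{ij}$ invariant and acts there as a scalar: $\tfrac{a_i+a_j}{2}$, $\tfrac{a_i^{-1}+a_j^{-1}}{2}$, and $1$ respectively. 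Consequently, $P_{a,a^{-1}}=L_aL_{a^{-1}}+L_{a^{-1}}L_a-I$ acts on $\V_{ij}$ as multiplication by
$$d_{ij}:=\frac{(a_i+a_j)(a_i^{-1}+a_j^{-1})}{2}-1=\frac{a_i^2+a_j^2}{2a_ia_j}.$$

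Because $a_i,a_j\neq 0$, every $d_{ij}$ is nonzero (in fact $|d_{ij}|\geq 1$), so $P_{a,a^{-1}}$ is invertible and $(P_{a,a^{-1}})^{-1}$ acts on $\V_{ij}$ as multiplication by $c_{ij}:=d_{ij}^{-1}=\tfrac{2a_ia_j}{a_i^2+a_j^2}$, with $c_{ii}=1$. In the language of the Schur-product transformations recalled in Section~3, this says
$$(P_{a,a^{-1}})^{-1}(x)=C\bullet x\qquad\text{where }C=[c_{ij}]\in \Sn,$$
relative to the Jordan frame $\{e_1,\ldots,e_n\}$. In particular $(P_{a,a^{-1}})^{-1}(e)=\sum_i c_{ii}e_i=e$, and the self-adjointness of $P_{a,a^{-1}}$ (inherited from that of $L_a$ and $L_{a^{-1}}$, together with $(L_aL_{a^{-1}})^*=L_{a^{-1}}L_a$) gives $\bigl((P_{a,a^{-1}})^{-1}\bigr)^*(e)=e$ as well.

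It remains to prove $(P_{a,a^{-1}})^{-1}$ is a positive transformation; by the result cited in Section~3 (\cite{gowda positive map}, Example~8), it suffices to verify that the symmetric matrix $C$ (which already has ones on its diagonal) is positive semidefinite. Setting $D:=\diag(a_1,\ldots,a_n)$ and $K:=[\tfrac{2}{a_i^2+a_j^2}]$, we have $C=DKD$, so it is enough to show $K$ is positive semidefinite. Putting $s_i:=a_i^2>0$, the matrix $K$ becomes the classical Cauchy matrix $[\tfrac{2}{s_i+s_j}]$; the integral representation $K_{ij}=2\int_0^\infty e^{-(s_i+s_j)t}\,dt$ exhibits $K$ as the Gram matrix of $\{\sqrt{2}\,e^{-s_it}\}_{i=1}^{n}$ in $L^2([0,\infty))$, which is positive semidefinite. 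The main obstacle is arguably this final positive-semidefiniteness step, but it reduces to this standard fact about Cauchy matrices.
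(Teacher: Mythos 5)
Your proposal is correct and follows essentially the same route as the paper's own proof: Peirce-diagonalize $P_{a,a^{-1}}$ over the frame of $a$ to get the eigenvalues $\tfrac{a_i^2+a_j^2}{2a_ia_j}$, identify the inverse as the Schur-product transformation for the matrix $\bigl[\tfrac{2a_ia_j}{a_i^2+a_j^2}\bigr]$, establish positive semidefiniteness via the Gram/Cauchy-matrix integral representation, and invoke Example~8 of \cite{gowda positive map}. The only cosmetic difference is that the paper exhibits the matrix directly as the Gram matrix of $\phi_i(t)=\sqrt{2}\,a_i e^{-a_i^2 t}$ rather than factoring it as $DKD$ with $K$ a Cauchy matrix, which is the same computation.
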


\begin{proof}
We consider the spectral decomposition $a=\sum a_ie_i$, where the eigenvalues $a_i$ are nonzero and $\{e_1,e_2,\ldots, e_n\}$ is a Jordan frame. With respect to this Jordan frame, we consider the Peirce decomposition of any element $u\in \V$ (\cite{faraut-koranyi}, Theorem IV.2.1) in the form $u=\sum_{i\leq j}u_{ij}.$ Then, from \cite{gowda-tao-sznajder} (page 720), 
$$L_a(u)=\sum_{i\leq j}\frac{a_i+a_j}{2}\,u_{ij}\quad\mbox{and}\quad L_{a^{-1}}(u)=\sum_{i\leq j}\frac{a_i^{-1}+a_j^{-1}}{2}\,u_{ij}.$$
Since $a\circ a^{-1}=e$, an  easy computation shows that 
$$P_{a,a^{-1}}(u)=\sum_{i\leq j} \frac{a_i^2+a_j^2}{2a_{i}a_{j}}\,u_{ij}.$$
As $P_{a,a^{-1}}(u)=0\Rightarrow u=0$, the linear transformation $P_{a,a^{-1}}$ is invertible and 
$$(P_{a,a^{-1}})^{-1}(u)=\sum_{i\leq j} \frac{2a_{i}a_{j}}{a_i^2+a_j^2}\,u_{ij}.$$ 
Now consider the real symmetric matrix $A=[a_{ij}]$, where $a_{ij}=\frac{2a_{i}a_{j}}{a_i^2+a_j^2}$.
By considering the functions $\phi_i(t)$ in $L_2([0,\infty))$, $i=1,2,\ldots, n$, defined by 
$$\phi_i(t)=\sqrt{2}\,a_i\,e^{-a_i^2\,t},$$
we see that the inner product (computed in $L_2([0,\infty))$)
$$\langle \phi_i,\phi_j\rangle=\int_{0}^{\infty}2a_ia_je^{-(a_i^2+a_j^2)t}\,dt=a_{ij}.$$
Hence, $A$ is the Gram matrix corresponding to the set $\{\phi_1,\phi_2,\ldots,\phi_n\}$ in  $L_2([0,\infty))$. It follows that $A$ is positive semidefinite. Now, using the definition of `Schur product' induced transformation (see Section 4) 
$$(P_{a,a^{-1}})^{-1}(u)= A\bullet u\,\,(u\in \V).$$
As $A$ is positive semidefinite and has ones on the diagonal, from Example 8 in \cite{gowda positive map} we see 
that the transformation $u\rightarrow A\bullet u$ is doubly stochastic. This proves that $(P_{a,a^{-1}})^{-1}$
is doubly stochastic.
\end{proof}
 

\end{document}